\documentclass[a4paper,11pt]{amsart}
\usepackage{amssymb}
\usepackage{placeins}
\usepackage{a4wide}
\usepackage[shortlabels]{enumitem}
\usepackage{color}

\usepackage{amsmath,amsfonts,amsthm,latexsym}
\newtheorem{thm}{Theorem}
\newtheorem{lem}[thm]{Lemma}
\newtheorem {cor}[thm]{Corollary}
\newtheorem {prop}[thm]{Proposition}

\theoremstyle{definition}

\newtheorem{exa}[thm]{Example}

\theoremstyle{remark}
\newtheorem{rem}[thm]{Remark}

\DeclareMathOperator{\Gal}{Gal}

\DeclareMathOperator{\id}{id}
\DeclareMathOperator{\rad}{rad}
\DeclareMathOperator{\dens}{dens}
\DeclareMathOperator{\N}{N}
\DeclareMathOperator{\Li}{Li}
\DeclareMathOperator{\ind}{ind}
\DeclareMathOperator{\Haar}{Haar}

\newcommand{\p}{\mathfrak{p}}

\newcommand{\Q}{\mathbb{Q}}
\newcommand{\NN}{\mathbb{N}}
\newcommand{\Z}{\mathbb{Z}}
\newcommand{\C}{\mathbb{C}}

\newcommand{\mc}{\mathcal}
\renewcommand{\geq}{\geqslant}
\renewcommand{\leq}{\leqslant}

\newcommand{\av}[1]{\left\lvert#1\right\rvert}
\newcommand{\floor}[1]{\lfloor #1 \rfloor}

\newcommand{\set}[1]{\left\{ #1 \right\}}

\usepackage{xr-hyper}
\usepackage{hyperref}

\begin{document}

\title[Unconditional results for Artin-type problems over number fields]{Unconditional results for Artin-type problems \\ over number fields}

\author{Pietro~Sgobba}
\address{Department of Pure Mathematics, Xi'an Jiaotong--Liverpool University, 111 Ren'ai Road, Suzhou 215123, China}
\email{pietro.sgobba@xjtlu.edu.cn}

\subjclass{Primary: 11R45; Secondary: 11R44}
\keywords{Artin's primitive root conjecture, multiplicative index, reductions of algebraic numbers, densities of primes, Chebotarev density theorem.}


\begin{abstract}
Let $K$ be a number field and let $G$ be a finitely generated subgroup of $K^\times$. For all but finitely many primes $\mathfrak p$ of $K$, the reduction $(G \bmod \mathfrak p)$ generates a well-defined subgroup of the multiplicative group of the residue field at $\mathfrak p$, and we may consider its index. We study the  primes of $K$ for which this index lies in a given set of positive integers $S$. In particular, we prove that under certain convergence conditions on series associated to $S$  this problem can be addressed without assuming the Generalized Riemann Hypothesis (GRH), and we provide asymptotic formulas for the corresponding prime-counting functions. Problems of this type are related to Artin’s primitive root conjecture, which has been proven under the assumption of GRH  (Hooley, 1967).
\end{abstract}

\maketitle

\section{Introduction}
Let $K$ be a number field, and let $G$ be a finitely generated and torsion-free subgroup of $K^\times$. Up to discarding finitely many primes $\p$ of $K$, the reduction $(G\bmod\p)$ is the subgroup of $k_\p^\times$, where $k_\p$ is the residue field at $\p$, generated by the reductions $(\alpha\bmod\p)$ for $\alpha\in G$. The multiplicative index of this subgroup, denoted by $\ind_\p(G)$, is well-defined and we may consider the index map
\[ \Psi: \p\longmapsto \Psi(\p):=\ind_\p(G). \]
Given a set $S$ of positive integers, we investigate the natural density of the preimage $\Psi^{-1}(S)$, if it exists. Under the assumption of the Generalized Riemann Hypothesis (GRH), it is known that given a positive integer $t$, the density of the set of primes of $K$ given by $\Psi^{-1}(t)$ exists and has an explicit formula, see \cite[Proposition 1]{zieg}, namely
\[ \dens(\Psi^{-1}(t)) =  \sum_{n\geq1} \frac{\mu(n)}{[K(\zeta_{nt},G^{1/nt}):K]}, \]
where $\zeta_n$ denotes a primitive $n$th root of unity and $\mu$ is the Möbius function.
Also, we consider all extensions of $K$ in a fixed algebraic closure.
Furthermore, again under GRH, the set $\Psi^{-1}(S)$ admits a natural density given by the sum of the densities $\dens(\Psi^{-1}(t))$ for $t\in S$, i.e.
\[ \dens(\Psi^{-1}(S)) =  \sum_{t\in S}\sum_{n\geq1} \frac{\mu(n)}{[K(\zeta_{nt},G^{1/nt}):K]}, \]
see \cite[Corollary 2]{MPS}. Taking $t=1$, the set $\Psi^{-1}(1)$ consists of all primes $\p$ such that $G$ contains a primitive root modulo $\p$, and taking $K=\Q$ and $G$ of rank $1$ reduces to the problem of Artin's primitive root conjecture, which was proven under GRH by Hooley \cite{hooley} in 1967, and its analogue over number fields by Cooke and Weinberger \cite{cooke}; see \cite{MoreeArtin} for an introduction to the research area around this conjecture.  The map $\Psi$ was studied in more generality, under GRH, in two recent articles by Järviniemi and Perucca \cite{JP}, and jointly with the author \cite{JPS}.

Here we investigate the condition $\ind_\p(G)\in S$ without assuming GRH. Instead, we impose an asymptotic bound on a certain series associated with $S$, and then we focus on sets $S$  defined in terms of the $\ell$-adic valuations of their elements, obtaining unconditional partial extensions of the results in \cite{JPS}.

In fact, for some sets $S$ it is well-known that the set of primes $\Psi^{-1}(S)$ can be studied unconditionally. For instance, taking $S=n\NN$ for a given integer $n$, for a prime $\p$ of degree $1$ the condition $n\mid \ind_\p(G)$ is equivalent to $\p$ splitting completely in $K(\zeta_n,G^{1/n})$, and by Chebotarev's density theorem the corresponding natural density is $1/[K(\zeta_n,G^{1/n}):K]$. As a simple application, we may further obtain such examples if we only need to apply this result finitely many times, for instance for the index having a prescribed $\ell$-adic valuation for a fixed $\ell$, or for the index being coprime to a given integer.

In 1995, over the rationals Pappalardi  proved that the primes $p$ for which $\ind_p(2)$ is $k$-free, $k\geq2$, (i.e.\ not divisible by $k$th powers larger than $1$) admit a natural density unconditionally, see \cite[Corollary 5]{pappah}. Notice that differently from the previous examples, this one involves divisibility conditions on the index for infinitely many prime numbers.
In fact, Pappalardi obtained this result as an application of \emph{Hooley's method with weights} \cite{pappah}, which we introduce here in the context of number fields and with an additional condition on the Frobenius conjugacy classes. 

\subsection{Problem setup}\label{sec-setup}
Let $F/K$ be a finite Galois extension of $K$, and let $C$ be a conjugacy-stable subset of $\Gal(F/K)$.
For $\p$ a prime of $K$ which does not ramify in $F$, we denote by $(\p,F/K)$ the conjugacy class of the Frobenius elements at the primes of $F$ above $\p$.
For a given function $f:\NN\to \C$, we consider the sum
\begin{equation}\label{gap}
     \sum_{\substack{\p:\ \N\p\leq x \\ (\p,F/K)\subseteq C}} f(\Psi(\p)),
\end{equation}
where $\N$ is the norm over $K/\Q$ and the sum runs through all primes $\p$ of $K$ except the finitely many primes such that $v_\p(\alpha)\neq0$ for some  $\alpha\in G$ or which ramify in $F$  (we will implicitly discard these primes whenever we consider a condition on $\ind_\p(G)$ or on $(\p,F/K)$, respectively).

Let  $\pi_{n,C}(x)$ be the number of primes $\p$, with $\N\p\leq x$, such that $n\mid \ind_\p(G)$ and $(\p,F/K)\subseteq C$, and write $\pi_{n}(x)$ if the Frobenius condition is trivial. 
Hooley's conditional proof of Artin's conjecture is based on the inclusion-exclusion identity
\begin{equation}\label{eq-hoo}
    \sharp\{\p: \N\p\leq x, \ind_\p(G)=1 \} = \sum_{n\geq1}\mu(n)\pi_n(x).
\end{equation}
By the Möbius inversion formula we may write 
\[ f(n)=\sum_{d\mid n}g(d), \quad \text{ with } \quad  g(n)=\sum_{d\mid n} \mu(n/d)f(d). \]
Then, the sum \eqref{gap} equals
\[ \sum_{\substack{\p:\ \N\p\leq x \\ (\p,F/K)\subseteq C}} \sum_{d\mid\Psi(\p)} g(d)
=  \sum_{d\leq x} g(d) \sum_{\substack{\p:\ \N\p\leq x \\ d\mid\Psi(\p) \\ (\p,F/K)\subseteq C}} 1
 =  \sum_{d\leq x} g(d)\pi_{d,C}(x),  \]
yielding the identity
\begin{equation}\label{eq-mobgen}
    \sum_{\substack{\p:\ \N\p\leq x \\ (\p,F/K)\subseteq C}} f(\Psi(\p)) = \sum_{n\geq1} g(n)\pi_{n,C}(x),
\end{equation}
which is a generalization of \eqref{eq-hoo}; see also \cite[p.376]{pappah} and \cite[Sec.1.3]{felixmu}.

In view of \eqref{eq-mobgen} and since $\pi_{n,C}(x)$ is given asymptotically by Chebotarev's density theorem, we investigate under which conditions we may establish the asymptotic formula
\begin{equation}\label{eq-asymp}
   \sum_{\substack{\p:\ \N\p\leq x \\ (\p,F/K)\subseteq C}} f(\Psi(\p)) \sim \Li(x)\sum_{n\geq1} \frac{g(n)c(n)}{[F(\zeta_n,G^{1/n}):K]}, 
\end{equation}
where $c(n)$ is the number of $K$-automorphisms in $C$ which fix $F\cap K(\zeta_{n},G^{1/n})$ (this coefficient arises when applying Chebotarev's density theorem; we define it formally in Sec.\ \ref{sec-cheb}), and
where we assumed that the series multiplying $\Li(x)$ is convergent. Asymptotic formulas of this form were studied by Felix and Ram Murty \cite{felixmu} over $\Q$ for several arithmetic functions $f$ under GRH, and later by Felix \cite{felixrank} for higher rank. Notice that the constants arising from \eqref{eq-asymp} have been recently studied by Akbary and Fakhari \cite{akbary}.

If  $\chi_S$ denotes the characteristic function of $S$, then taking $f=\chi_S$ the above sum is the counting function for the set of primes $\p$ such that $\ind_\p(G)\in S$ and $(\p,F/K)\subseteq C$. Namely, we have
\begin{equation}\label{eq-count}
    \sum_{\substack{\p:\ \N\p\leq x \\ (\p,F/K)\subseteq C}} \chi_S(\Psi(\p)) =\sharp\set{ \p:\N\p\leq x,\ \ind_\p(G)\in S,\ \begin{pmatrix}\p \\ F/K  \end{pmatrix}\subseteq C }.
\end{equation}

If \eqref{eq-asymp} holds for $f=\chi_S$, from \eqref{eq-count} we would then obtain an asymptotic formula and a density formula for the considered counting function.
Under GRH this was achieved by Felix and Ram Murty \cite{felixmu} over $\Q$, and Felix \cite{felixind} expressed the obtained density and studied its positivity.

Unconditionally, Pappalardi proved that over $\Q$ the asymptotic \eqref{eq-asymp} can be obtained if the series $\sum_{n\geq1}\av{g(n)}/\varphi(n)$ converges (where $\varphi$ is Euler's totient function), and $\sum_{n>z}\av{g(n)}/n=o(1/\log z)$, see \cite[Theorem 1]{pappah}. These conditions are indeed satisfied for $\chi_S$ taking $S$ the set of $k$-free integers.

\subsection{Outline}
Our first main achievement is Theorem \ref{thm_uncond}, which is a generalization and a variation of this result of Pappalardi. It states that if $\sum_{n>z}\av{g(n)}/\varphi(n)\ll z^{-\kappa}$ for some $\kappa>0$, then \eqref{eq-asymp} holds, and it provides an asymptotic formula based on a recent version of Chebotarev's density theorem for cyclotomic--Kummer extensions of number fields \cite[Theorem 10]{sgobba}, also stated here as Theorem \ref{thm-cheb}. 

 In Theorem \ref{thm_kl} we first apply the obtained result to the case of $k$-free indices,  and more generally $k(\ell)$-free indices (i.e.\ not divisible by $\ell^{k(\ell)}$ for all $\ell$), with $k\geq2$ and $k(\ell)\geq2$ for all $\ell$. This illustrates the idea that the series $\sum_{n\geq1}\av{g(n)}/\varphi(n)$ would  converge if $g$ satisfies certain properties implied by the splitting conditions on the primes. For instance, for the index to be squarefree, the primes $\p$ must not split completely in any extension $K(\zeta_{\ell^2},G^{1/\ell^2})$ for all $\ell$. This would then push $g(n)$ to be nonzero if and only if $v_\ell(n)\in\{0,2\}$ for all $\ell$,  where $v_\ell$ is the $\ell$-adic valuation, resulting in the above series being convergent.

We then apply Theorem \ref{thm_uncond} to study the condition $\ind_\p(G)\in H$ for more general sets $H$ whose elements are defined through their $\ell$-adic valuations, for all prime numbers $\ell$. More precisely, we focus on sets $H$ such that $H=\cap_\ell H_\ell$, where $H_\ell$ is the preimage under $v_\ell$ of $v_\ell(H)$. 
We call this type of sets \emph{cut by valuations}. In this case, we assume that $1\in H$ and that for each $\ell$ we have at least $1\in v_\ell(H)$ in Theorem \ref{vlfin} (with some other assumptions), and $1,2\in v_\ell(H)$ in Theorem \ref{vlinf}. In both cases we establish, in particular, the identity
\[ \dens(\Psi^{-1}(H)) = \sum_{n\geq1} \frac{g_H(n)}{[K(\zeta_{n},G^{1/n}):K]}, \]
where $g_H(n)=\sum_{d\mid n, d\in H}\mu(n/d)$ is a multiplicative function.

We then consider slightly more general sets $H$ by allowing finitely many valuation conditions not to be independent, which we call \emph{almost cut by valuations}.
We first study sets $H$ such that $H=v_Q^{-1}(v_Q(H))$, for some fixed $Q\geq1$, where $v_Q(n)$ is the tuple $(v_\ell(n))_{\ell\mid Q}$, see Theorem \ref{thm-HQ}. In other words, elements $n$ of such sets $H$ only have to satisfy properties on $\gcd(n,Q^\infty)$, where $Q^\infty$ is the supernatural number $\prod_{p\mid Q}p^\infty$ ($p$ denotes a prime). The special case where $Q$ is a prime number is treated as an illustrative result in Proposition \ref{prop-l}.
The general case for sets almost cut by valuations is achieved in Theorem \ref{prop-almostcut}. The case $1\notin H$ for sets cut by valuations is thus treated in Remark \ref{rem-relax} as an application of this result.

Finally, in Section \ref{sec-prod} we express the densities for which the convergence condition of Theorem \ref{thm_uncond} is satisfied first as limits and then in Euler product form. In particular, for the latter, in Theorem \ref{thm_prod} and Proposition \ref{prop_prod}, for $H$ almost cut by valuations, we prove the formula $\dens(\Psi^{-1}(H))=D_{K,G,H}\cdot A_{H,r}$, where  $A_{H,r}$ is an Artin-type constant, with $r$ the rank of $G$, e.g.\ if $1\in H$ we have
\[ A_{H,r}=\prod_{\ell} \bigg( 1-\frac{1}{(\ell-1)\ell^r}  + \frac{\ell^{r+1}-1}{(\ell-1)\ell^r}\sum_{\substack{v\geq1 \\ v\in v_\ell(H)}}\frac{1}{\ell^{v(r+1)}} \bigg), \]
 and $D_{K,G,H}$ is interpreted as a correction factor resulting from the entanglement between cyclotomic fields and the Kummer extensions of $K$ related to $G$.

Notice that the above results are already known under the assumption of GRH, in more generality, see  \cite{JP,JPS}. The novelty of this work consists in dealing unconditionally with this general problem under specific convergence conditions associated to the considered set of integers.

 \section{Preliminaries}

\subsection{Notation}\label{sec-not}
Throughout this paper we let $F/K$ be a Galois extension of number fields, $G$ a finitely generated torsion-free subgroup of $K^\times$ of positive rank $r$, and $C$ a conjugacy-stable subset of $\Gal(F/K)$. We also keep all notation from the Introduction.

Given $S$ a set of positive integers, we define
\[ \mc P_{S,C} := \set{ \p: \ind_\p(G)\in S,\ \begin{pmatrix}\p \\ F/K  \end{pmatrix}\subseteq C  }, \]
and we write $\mc P_S$ if the Frobenius condition is trivial, i.e.\ if $C=\Gal(F/K)$.
For a set $\mc P$ of primes of $K$, we write  
\[ \pi_{\mc P}(x):=\sharp\{ \p:\N\p\leq x,\ \p\in\mc P \} \]
for the prime-counting function associated to $\mc P$, and $\dens(\mc P)$ (which we already used in the Introduction) for its natural density, if it exists. According to the notation introduced above, we have  $\pi_{n,C}(x)=\pi_{\mc P_{n\NN,C}}(x)$. We denote by $\pi_{F/K,C}(x)$ the function counting the primes $\p$ of $K$ with norm up to $x$, which are unramified in $F$ and with Frobenius conjugacy class at $\p$ lying in $C$. 
We write $\pi_{F/K}(x):=\pi_{F/K,\{\id\}}(x)$, which counts the primes splitting completely in $F$.
For integers $m,n\geq1$ with $n\mid m$, we set $K_{m,n}:=K(\zeta_m,G^{1/n})$. 

Also, we make use of the following notation: $(m,n)$ is the g.c.d.\ of $m,n$;
$\Li(x)$ is the logarithmic integral function; given $n\geq1$, we write $\rad(n)$ for the radical of $n$, i.e.\ the largest squarefree integer dividing $n$, and $\omega(n)$ for the prime omega function, i.e.\ the number of distinct prime factors of $n$.

\subsection{On Chebotarev's density theorem}\label{sec-cheb}
Here we state an effective and unconditional version of Chebotarev's density theorem for a general cyclotomic--Kummer extension, which was obtained as an application of a recent work of Thorner and Zaman \cite{TZ}.

\begin{thm}[{\cite[Theorem 10]{sgobba}}]\label{thm-cheb} 
For all integers $m,n\geq1$ with $n\mid m$, define
\[
C_{m,n}:=\{\sigma\in \Gal(F_{m,n}/K) : \sigma|_F\in C,\, \sigma|_{K_{m,n}}=\id  \}\,,
\]
which is a conjugacy-stable subset of $\Gal(F_{m,n}/K)$. There exist constants $c_1,c_2>0$, which depend only on $F$ and $G$, such that, for all  $b>0$ and uniformly for
\[
m\leq c_1\left( \frac{\log x}{(\log\log x)^{b}} \right)^{\frac{1}{r+2}}\,,
\]
we have
\[
\pi_{F_{m,n}/K,C_{m,n}} (x)=
\frac{\sharp C_{m,n}}{[F_{m,n}:K]}\Li(x)+
O_{F,G}\left(\frac{x}{\log x}\cdot e^{-c_2(\log\log x)^{b}}\right).
\]
\end{thm}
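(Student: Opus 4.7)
The plan is to apply the unconditional effective Chebotarev density theorem of Thorner and Zaman \cite{TZ} directly to the cyclotomic--Kummer extension $F_{m,n}/K$ together with the subset $C_{m,n}$, keeping $m$ in the range where the error estimate in their bound dominates the quoted exponential decay.

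First I would check that $C_{m,n}$ is conjugacy-stable in $\Gal(F_{m,n}/K)$. The extension $K_{m,n}/K$ is Galois since $n\mid m$ ensures that $K_{m,n}=K(\zeta_m,G^{1/n})$ contains all $n$-th roots of unity and hence all Galois conjugates of any $\alpha^{1/n}$ with $\alpha\in G$. For $\sigma\in C_{m,n}$ and $\tau\in\Gal(F_{m,n}/K)$, the restriction $(\tau\sigma\tau^{-1})|_F$ is a conjugate of $\sigma|_F\in C$ and thus lies in $C$, while $(\tau\sigma\tau^{-1})|_{K_{m,n}}=\id$ because $K_{m,n}/K$ is Galois and $\sigma|_{K_{m,n}}=\id$.

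Next I would assemble the arithmetic input required by \cite{TZ}: a degree bound $[F_{m,n}:K]\leq [F:K]\cdot\phi(m)\cdot n^r \ll_{F,G} m^{r+1}$ (using $n\mid m$, so $n\leq m$), and an upper bound on $\log|d_{F_{m,n}}|$. The primes of $K$ ramifying in $F_{m,n}$ are those ramifying in $F/K$, the primes above rational primes dividing $m$, and a finite set of bad primes attached to $G$; from this one obtains a discriminant bound polynomial in $m$ with implicit constants depending only on $F$ and $G$. Feeding these estimates into Thorner--Zaman then yields the main term $\frac{\sharp C_{m,n}}{[F_{m,n}:K]}\Li(x)$ plus an error whose exponential decay rate is essentially $\log x$ divided by a polynomial in $m$ of degree reflecting both the degree and the log-discriminant growth. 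Balancing this against the prescribed error $\frac{x}{\log x}e^{-c_2(\log\log x)^b}$ dictates the admissible range for $m$; after optimization one is forced into the constraint $m\leq c_1(\log x/(\log\log x)^b)^{1/(r+2)}$, where the exponent $r+2$ decomposes as $r+1$ from the degree together with one further unit coming from the discriminant bound.

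I expect the main obstacle to be producing a sharp enough uniform discriminant estimate for the tower $F_{m,n}/K$, so that the Kummer part contributes only the expected extra factor and the final exponent in the uniformity range matches $r+2$ rather than something weaker. A secondary point is ensuring that the constants $c_1,c_2$ genuinely depend only on $F$ and $G$, which requires absorbing the finitely many bad primes of $G$ and the ramification data of $F/K$ into the implicit constants once and for all.
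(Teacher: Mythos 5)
Your overall strategy coincides with the one behind the paper's source for this statement: the paper does not reprove Theorem \ref{thm-cheb} but quotes it from \cite[Theorem 10]{sgobba}, which is indeed obtained by applying the Thorner--Zaman theorem \cite{TZ} to the extension $F_{m,n}/K$ (summing over the conjugacy classes contained in $C_{m,n}$), with the degree bound $[F_{m,n}:\Q]\ll_{F,G}m^{r+1}$ and a discriminant bound coming from the fact that only the primes dividing $m$, the primes ramified in $F/K$, and the finitely many bad primes attached to $G$ can ramify. Your conjugacy-stability check and these two arithmetic inputs are all in order.

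There is, however, a genuine gap: the unconditional Thorner--Zaman (as in Lagarias--Odlyzko) statement is not of the form ``main term plus exponentially small error''; it carries a possible exceptional-zero term of the shape $\frac{\sharp C_{m,n}}{[F_{m,n}:K]}\Li(x^{\beta_1})$, where $\beta_1$ is a putative real Landau--Siegel zero of the Dedekind zeta function of $L=F_{m,n}$. Your sketch never mentions this term, and it cannot be absorbed by the degree/discriminant bookkeeping you describe: one needs an effective lower bound such as Stark's $1-\beta_1\gg d_L^{-1/n_L}$, combined with the estimate $\frac{1}{n_L}\log d_L\leq (r+1)\log m+\log m+O_{F,G}(1)$ (the $\log n_L$ contribution plus the ramified primes), which gives $1-\beta_1\gg_{F,G}m^{-(r+2)}$ and hence $\Li(x^{\beta_1})\ll \frac{x}{\log x}e^{-c(\log\log x)^{b}}$ precisely when $m^{r+2}\ll \log x/(\log\log x)^{b}$. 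In particular, your explanation of the exponent $r+2$ (``$r+1$ from the degree plus one from the discriminant'' in balancing the non-exceptional error) mislocates the true constraint: the non-exceptional error term alone would allow $m$ nearly up to $(\log x)^{1/(r+1)}$, and it is the Siegel-zero step that forces the stated range $m\leq c_1\left(\log x/(\log\log x)^{b}\right)^{1/(r+2)}$. Without this ingredient the claimed error term is not reached.
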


Taking $F=K$ yields an asymptotic for the function $\pi_{K_{m,n}/K}(x)$  counting the primes splitting completely in $K_{m,n}$. As mentioned in the Introduction  we have $\pi_n(x)=\pi_{K_{n,n}/K}(x)$, and more generally $\pi_{n,C}(x)=\pi_{F_{n,n}/K,C_{n,n}}(x)$. Also, in the latter case we have $\pi_{n,C}(x)\sim \frac{c(n)}{[F_{n,n}:K]}\Li(x)$, where 
\[  c(n):= \sharp C_{n,n} = \sharp \, C\cap\Gal(F/F\cap K_{n,n}) \leq [F:F\cap K_{n,n}]\leq [F:K]. \]
Notice that, since $[F:F\cap K_{n,n}]=[F_{n,n}:K_{n,n}]$, we have
\begin{equation}\label{eq-c}
\frac{c(n)}{[F_{n,n}:K]} \leq \frac{1}{[K_{n,n}:K]},
\end{equation}
with equality holding if  $C\supseteq\Gal(F/F\cap K_{n,n})$.

\begin{prop}[{\cite[Proposition 11]{sgobba}}]\label{prop-BT}
With the notation of Theorem \ref{thm-cheb}, there exists a constant $c_3>0$ (depending only on $F$ and $G$), such that, for all $b>0$ and uniformly for
\[
m\leq c_3\left(\frac{\log x}{(\log\log x)^{1+b}}\right)^{\frac{1}{r+1}},
\]
we have
\[ \pi_{F_{m,n}/K,C_{m,n}}(x)\leq (2+o(1))\frac{\sharp C_{m,n}}{[F_{m,n}:K]}\Li(x). \]
\end{prop}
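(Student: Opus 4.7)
The plan is to mirror the proof of Theorem \ref{thm-cheb} (i.e.\ \cite[Theorem~10]{sgobba}) but to replace the effective Chebotarev input from Thorner--Zaman with their corresponding \emph{Brun--Titchmarsh}-style upper bound for prime ideals with prescribed Frobenius. This substitution costs a factor $2+o(1)$ in front of the main term, but it buys a strictly wider uniformity range, because the upper bound is proved without invoking the Deuring--Heilbronn phenomenon, so one does not pay the extra logarithmic factor in the discriminant that the asymptotic requires.

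Concretely, I would set $L:=F_{m,n}$ and $D:=C_{m,n}\subseteq\Gal(L/K)$, and apply the Thorner--Zaman Brun--Titchmarsh inequality
\[ \pi_{L/K,D}(x) \leq (2+o(1))\,\frac{\sharp D}{[L:K]}\,\Li(x), \]
which is valid as soon as $\log |d_L| \ll \log x/(\log\log x)^{1+b}$. The first technical step is to give a clean upper bound for $\log|d_L|$ in terms of $m$ alone: by Kummer theory $[K_{m,n}:K]\leq \varphi(m)\,n^{r}$, and since $n\mid m$ this is $\ll m^{r+1}$, hence $[L:\Q]\ll_{F,K} m^{r+1}$. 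Ramification in $L/K$ outside a fixed finite set depending only on $F$ and $G$ occurs only at primes above those dividing $m$, so a standard conductor-discriminant estimate yields $\log|d_L|\ll m^{r+1}\log m$, with implicit constants depending only on $F,G,K$.

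Plugging this into the admissibility condition gives $m^{r+1}\log m \ll \log x/(\log\log x)^{1+b}$; the factor $\log m$ is absorbed into a small adjustment of $b$ (or, equivalently, into $c_3$), which leaves exactly the stated uniform range
\[ m\leq c_3\left(\frac{\log x}{(\log\log x)^{1+b}}\right)^{1/(r+1)}. \]
Applying the inequality with $D=C_{m,n}$ and identifying $\pi_{F_{m,n}/K,C_{m,n}}(x)=\pi_{L/K,D}(x)$ then yields the claim.

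The main obstacle, as in the proof of Theorem \ref{thm-cheb}, is producing a discriminant bound that depends only on $m$, uniformly in $n$. This is the reason for insisting on the hypothesis $n\mid m$ in the setup: it lets one replace $n^r$ by $m^r$ without loss and keeps the bound $[L:\Q]\ll m^{r+1}$ crisp, so that solving for $m$ in the Brun--Titchmarsh admissibility condition gives precisely the exponent $1/(r+1)$ rather than the $1/(r+2)$ forced by the asymptotic version.
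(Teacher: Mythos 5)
Proposition \ref{prop-BT} is not proved in this paper at all: it is imported verbatim from \cite[Proposition 11]{sgobba}, where it is obtained in essentially the way you describe, namely by applying a Brun--Titchmarsh-type Chebotarev bound coming from \cite{TZ} to $L=F_{m,n}$, $D=C_{m,n}$, together with the Kummer-theoretic degree bound $[F_{m,n}:\Q]\ll_{F,K}\varphi(m)n^{r}\ll m^{r+1}$ (using $n\mid m$) and the conductor--discriminant estimate $\log|d_L|\ll_{F,K,G} m^{r+1}\log m$, since the primes ramifying in $L$ divide $m$ times a fixed integer depending only on $F$, $K$, $G$. So your skeleton is the intended one.

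The genuine weak point is the admissibility bookkeeping, which as written does not deliver the statement. You assert that the upper bound applies ``as soon as $\log|d_L|\ll\log x/(\log\log x)^{1+b}$'', but in the stated range your own estimates give only $\log|d_L|\ll m^{r+1}\log m\ll\frac{\log x}{(\log\log x)^{1+b}}\cdot\log\log x=\frac{\log x}{(\log\log x)^{b}}$, missing your asserted hypothesis by a factor $\log\log x$; and neither proposed remedy works: the factor $\log m\asymp\log\log x$ is unbounded, so it cannot be absorbed into the constant $c_3$, while lowering $b$ proves the proposition only for $b>1$, whereas the claim is uniform in every $b>0$ (and small $b$ is the hard case, since it gives the \emph{wider} range of $m$). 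The gap closes only by quoting the Brun--Titchmarsh input with its actual, weaker admissibility condition: the available bounds (in \cite{TZ}, or Debaene's Brun--Titchmarsh inequality for Chebotarev) hold in a range polynomial in $d_L$ and yield the constant $2+o(1)$ once $\log|d_L|=o(\log x)$, which the estimate $\log|d_L|\ll\log x/(\log\log x)^{b}$ guarantees for every fixed $b>0$; so you must verify and state that form of the input rather than the one you posited. Finally, your heuristic that the improvement from exponent $1/(r+2)$ to $1/(r+1)$ reflects only a ``logarithmic factor in the discriminant'' is inaccurate: the difference amounts to a full factor of order $m$, caused by the much more restrictive admissibility condition needed for the asymptotic in Theorem \ref{thm-cheb}, not by a $\log m$.
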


\subsection{Prescribing $\ell$-adic valuations for a single prime $\ell$}
The following result can be intended as a warm-up problem illustrating how Theorem \ref{thm-cheb} can be used to study Artin-type problems involving divisibility conditions for finitely many distinct prime factors of the index.

\begin{prop}\label{prop-l}
Let $\ell$ be a prime number and let $V\subseteq\Z_{\geq0}$ be a set of integers. The number of primes $\p$ of $K$, with norm up to $x$, such that $v_\ell(\ind_\p(G))\in V$ and $(\p,F/K)\subseteq C$ is given by 
\[ \Li(x)\sum_{v\in V} \left(\frac{c(\ell^v)}{[F_{\ell^v,\ell^v}:K]} - \frac{c(\ell^{v+1})}{[F_{\ell^{v+1},\ell^{v+1}}:K]} \right) + O_{F,K,G}\left( x\left( \frac{(\log\log x)^2}{\log x} \right)^{1+\frac{1}{r+2}} \right). \]
\end{prop}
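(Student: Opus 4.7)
The plan is to exploit the telescoping identity
\[ \sharp\{\p:\N\p\leq x,\,v_\ell(\ind_\p(G))\in V,\,(\p,F/K)\subseteq C\} = \sum_{v\in V}\bigl(\pi_{\ell^v,C}(x)-\pi_{\ell^{v+1},C}(x)\bigr), \]
which follows since $v_\ell(\ind_\p(G))=v$ iff $\ell^v\mid\ind_\p(G)$ and $\ell^{v+1}\nmid\ind_\p(G)$. Only $v\leq\log x/\log\ell$ contribute, as $\ind_\p(G)<\N\p\leq x$.

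Taking $b=2$ in Theorem~\ref{thm-cheb}, I would set $z:=c_1(\log x/(\log\log x)^2)^{1/(r+2)}$ and split $V=V_1\sqcup V_2$ with $V_1:=\{v\in V:\ell^{v+1}\leq z\}$, so $|V_1|=O(\log\log x)$. For each $v\in V_1$, applying Theorem~\ref{thm-cheb} to both $\pi_{\ell^v,C}(x)$ and $\pi_{\ell^{v+1},C}(x)$ yields
\[ \pi_{\ell^v,C}(x)-\pi_{\ell^{v+1},C}(x) = \left(\frac{c(\ell^v)}{[F_{\ell^v,\ell^v}:K]}-\frac{c(\ell^{v+1})}{[F_{\ell^{v+1},\ell^{v+1}}:K]}\right)\Li(x)+O\!\left(\frac{x\,e^{-c_2(\log\log x)^2}}{\log x}\right). \]
Summing over $V_1$ contributes a Chebotarev error of size $O(x(\log\log x)e^{-c_2(\log\log x)^2}/\log x)$, which decays faster than any negative power of $\log x$ and is absorbed into the stated error.

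For the tail I would use $\sum_{v\in V_2}(\pi_{\ell^v,C}(x)-\pi_{\ell^{v+1},C}(x))\leq\pi_{\ell^{v^*},C}(x)$, where $v^*:=\min V_2$ satisfies $\ell^{v^*}>z/\ell$. When $\ell^{v^*}$ lies within the range of Proposition~\ref{prop-BT}, apply it directly; otherwise bound $\pi_{\ell^{v^*},C}(x)\leq\pi_{\ell^M,C}(x)$ for an $M\leq v^*$ with $\ell^M$ near the Brun--Titchmarsh threshold, which is valid since $\ell^{v^*}\mid\ind_\p(G)$ forces $\ell^M\mid\ind_\p(G)$. Combined with \eqref{eq-c} and the standard asymptotic $[K_{\ell^v,\ell^v}:K]\gg_{K,G}\ell^{v(r+1)}$, both cases give $\pi_{\ell^{v^*},C}(x)\ll\Li(x)/z^{r+1}$. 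Since the quantities $a_v:=c(\ell^v)/[F_{\ell^v,\ell^v}:K]$ are non-negative and non-increasing in $v$, the truncated main series differs from the full series by at most $a_{v^*}\ll z^{-(r+1)}$, so extending the sum from $V_1$ to all of $V$ in the main term costs only $O(\Li(x)/z^{r+1})$. A direct substitution of $z$ shows this error is absorbed into the stated $O(x((\log\log x)^2/\log x)^{1+1/(r+2)})$.

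The hard part will be the tail bound when $\ell^{v^*}$ exceeds the Brun--Titchmarsh range of Proposition~\ref{prop-BT}; the workaround is to replace $v^*$ with a smaller in-range exponent $M$ and exploit monotonicity of divisibility. The rest---termwise Chebotarev, the non-increasing nature of the densities, and the asymptotic lower bound on cyclotomic--Kummer degrees---is routine.
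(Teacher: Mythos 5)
Your proposal is correct and follows essentially the same route as the paper's proof: truncate at the Chebotarev-admissible threshold $y=c_1(\log x/(\log\log x)^2)^{1/(r+2)}$, apply Theorem \ref{thm-cheb} termwise to the exact-valuation counts $\pi_{\ell^v,C}(x)-\pi_{\ell^{v+1},C}(x)$ below the threshold, bound the tail of primes by a single $\pi_{\ell^{M},C}(x)$ with $\ell^M$ near the threshold via Proposition \ref{prop-BT}, and control the tail of the density series via the Kummer degree bound $[K_{\ell^v,\ell^v}:K]\gg_{K,G}\ell^{v(r+1)}$. Your case distinction about whether $\ell^{v^*}$ lies in the Brun--Titchmarsh range is sidestepped in the paper by always using an exponent $w$ with $\ell^w\leq y$ (automatically in range), and your monotonicity argument for the series tail replaces the paper's geometric-series estimate, but these are cosmetic variations; note that, like the paper's proof, yours implicitly takes $x$ large relative to $\ell$.
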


For trivial Frobenius condition, 
each term in the sum over $v\in V$ corresponds to the density of the primes $\p$ splitting completely in $K_{\ell^{v},\ell^{v}}$ but not in $K_{\ell^{v+1},\ell^{v+1}}$, and, equivalently, of the primes $\p$ such that $v_\ell(\ind_\p(G))=v$. Intuitively, this result does not need the assumption of GRH because the conditions on the index only concern a single prime $\ell$ and the splitting conditions involve number fields of degrees growing fast enough.

\begin{proof}
For $v\geq0$, we set $\mc S_v:=\{\p:v_\ell(\Psi(\p))=v, (\p,F/K)\subseteq C\}$, and $\mc S
:=\bigsqcup_{v\in V}\mc S_v$, i.e.\  the disjoint union of the sets $\mc S_v$ with $v\in V$. 
On the other hand, fixing $w>0$, all primes $\p\in\mc S_v$ with $v\geq w$ must split completely in $K_{\ell^w,\ell^w}$. Hence, we obtain
\[  \pi_{\mc S}(x) =  \sum_{\substack{v\in V \\ v\leq w}} \pi_{\mc S_v}(x)+O\big( \pi_{K_{\ell^w,\ell^w}/K}(x) \big). \]
We set $y:=c_1(\frac{\log x}{(\log\log x)^2})^\frac{1}{r+2}$ (from Theorem \ref{thm-cheb}) and $w:=\floor{\frac{\log y}{\log\ell}}-1$, so that for all $v\leq w$ we have $\ell^{v+1}\leq y$.
By Proposition \ref{prop-BT} we can bound the error term by
\[ \frac{1}{[K_{\ell^w,\ell^w}:K]}\frac{x}{\log x} \ll_{K,G} \frac{1}{\varphi(\ell^w)\ell^{rw}}\frac{x}{\log x} \ll \frac{1}{\ell^{(r+1)w}}\frac{x}{\log x}\leq \frac{x}{y\log x}, \]
where we applied Proposition \ref{kummer}(a), and we used that $\ell/(\ell-1)\leq2$ for all $\ell$, and $w$ is such that $\ell^{(r+1)w}\geq y\geq \ell^w$.

As for the main term, for $v\leq w$ we may apply Theorem \ref{thm-cheb} and obtain
\[ \Li(x)\sum_{\substack{v\in V \\ \ell^{v+1}\leq y}} \left(\frac{c(\ell^v)}{[F_{\ell^v,\ell^v}:K]} - \frac{c(\ell^{v+1})}{[F_{\ell^{v+1},\ell^{v+1}}:K]} \right)
    + O_{F,G}\Bigg( \frac{xy}{\log x \cdot e^{c_2(\log\log x)^2}} \Bigg). \]
The error term is  bounded by $\frac{x}{y\log x}$.
For the tail corresponding to the obtained series, recalling \eqref{eq-c}, we have
\[ \sum_{\substack{v\geq0 \\ \ell^{v}>y}}\frac{c(\ell^v)}{[F_{\ell^v,\ell^v}:K]} \ll_{K,G}
\sum_{\substack{v\geq w}} \frac{1}{\varphi(\ell^v)\ell^{vr}} \ll \frac{1}{\ell^{(r+1)w}}\sum_{v\geq0}\frac{1}{\ell^v} \ll \frac{1}{y}.  \]
Finally, all error terms are estimated by $\frac{x}{y\log x}$, which is bounded by the error term in the statement.
\end{proof}

\subsection{Further preliminaries}

Let us first state some results from Kummer theory.
\begin{prop}\label{kummer}
     There is an integer $B$ (depending only on $F$, $K$ and $G$), such that:
    \begin{enumerate}[(a)]
        \item for all $m,n\geq1$ with $n\mid m$ the ratio
        \[ \frac{\varphi(m)n^r}{[K_{m,n}:K]} \quad \text{ divides } \quad B; \]
\item  for all $n\geq1$ and all prime numbers $\ell\nmid nB$ we have
\[ K_{\ell,\ell}\cap F_{n,n} = K; \]
\item for all $m,n\geq1$  with $(m,nB)=1$ we have
\[ [F_{nm,nm}:K] = [F_{n,n}:K]\varphi(m)m^r. \]
    \end{enumerate}
\end{prop}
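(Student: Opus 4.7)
My plan is to prove the three parts in sequence: (a) from standard Kummer theory, (b) via a combined Galois-structural and ramification argument, and (c) by combining (a) and (b).

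For (a), I appeal to the uniform bound for the defect of cyclotomic--Kummer extensions: given a finitely generated torsion-free subgroup $G \leq K^\times$ of rank $r$, there is a constant $B_0 = B_0(K, G)$ such that $\varphi(m) n^r / [K_{m,n}:K]$ is a positive integer dividing $B_0$ for all $n \mid m$. This is a well-known result (see, e.g., Debry--Perucca), also used in \cite{sgobba}; in its stronger structural form the defect is determined by the ``$B_0$-part'' of $(m,n)$, and in particular vanishes when $(m,n)$ is coprime to $B_0$. I fix $B$ to be a multiple of $B_0$ divisible in addition by: every rational prime ramified in $F/\Q$ (hence also in $K/\Q$); every rational prime that is the residue characteristic of a prime $\p$ of $K$ with $v_\p(\alpha) \neq 0$ for some $\alpha$ in a fixed generating set of $G$; the class number of $K$; and the conductor of $K \cap \Q^{\mathrm{ab}}$.

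For (b), fix $n \geq 1$ and a prime $\ell \nmid nB$, and set $L := K_{\ell,\ell} \cap F_{n,n}$. By our choice of $B$, the extension $F_{n,n}/K$ is unramified at every prime of $K$ above $\ell$; in particular $\zeta_\ell \notin F_{n,n}$, for otherwise $F_{n,n}$ would contain the (above-$\ell$-ramified) field $K(\zeta_\ell)$. Writing $\Gamma := \Gal(K_{\ell,\ell}/K)$, $N := \Gal(K_{\ell,\ell}/K(\zeta_\ell)) \cong (\Z/\ell)^r$ and $Q := \Gal(K(\zeta_\ell)/K) \cong (\Z/\ell)^\times$, the sequence $1 \to N \to \Gamma \to Q \to 1$ splits by Schur--Zassenhaus (since $\gcd(|N|,|Q|) = 1$), and $Q$ acts on $N$ via the cyclotomic character, i.e., by scalar multiplication on $(\Z/\ell)^r$. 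A direct computation exploiting the faithfulness of this scalar action shows that every normal subgroup of $\Gamma$ either is contained in $N$ or contains $N$; by Galois correspondence, every Galois subextension of $K_{\ell,\ell}/K$ either contains $K(\zeta_\ell)$ or is contained in $K(\zeta_\ell)$. Applied to $L$, the first alternative is excluded by $\zeta_\ell \notin F_{n,n}$, so $L \subseteq K(\zeta_\ell)$. Since $\ell$ is unramified in $K/\Q$, the extension $K(\zeta_\ell)/K$ is totally ramified at every prime of $K$ above $\ell$, so its only subfield unramified at those primes is $K$ itself. Combined with the unramifiedness of $L/K$ above $\ell$, this forces $L = K$.

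For (c), with $(m, nB) = 1$, the coprimality $\gcd(m,n)=1$ gives $F_{nm,nm} = F_{n,n} \cdot K_{m,m}$, and
\[
[F_{nm,nm}:K] \;=\; [F_{n,n}:K] \cdot \frac{[K_{m,m}:K]}{[F_{n,n} \cap K_{m,m}:K]}.
\]
The identity $F_{n,n} \cap K_{m,m} = K$ follows from a direct extension of the argument in (b) (the Galois-structural and ramification inputs take the same form for $K_{\ell^e,\ell^e}$ in place of $K_{\ell,\ell}$, for every prime power $\ell^e \mid m$, and combine compatibly over the compositum using the product structure of $\Gal(K_{m,m}/K)$ valid for $(m,B)=1$). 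The equality $[K_{m,m}:K] = \varphi(m) m^r$ then follows from the structural form of (a). Substitution yields the stated identity. The principal obstacle is part (b): pinpointing the normal subgroup structure of $\Gal(K_{\ell,\ell}/K)$ and combining it with the absence of $\zeta_\ell$ in $F_{n,n}$ and the total ramification of $K(\zeta_\ell)/K$ above $\ell$ to force $L$ to reduce to $K$.
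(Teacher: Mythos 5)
Your parts (a) and (b) are in reasonable shape. The paper itself disposes of all three items by citation (part (a) to \cite[Theorem 1.1]{PS1}, part (b) to \cite[Proposition 3.1(iii)]{JP}, part (c) to (b) plus \cite[Theorem 1.1]{PST}), so your self-contained argument for (b) is a genuinely different and more elementary route, and it is essentially correct for odd $\ell$: the commutator computation does show that a normal subgroup of $\Gal(K_{\ell,\ell}/K)$ not contained in the Kummer part must contain it (since $1-q$ is invertible mod $\ell$ for $q\neq 1$), and the total ramification of $K(\zeta_\ell)/K$ above $\ell$ then kills $L$. Note two small points: for $\ell=2$ the argument collapses ($\zeta_2\in K$, $Q$ trivial) and the statement can actually fail (e.g.\ $K=F=\Q$, $G=\langle 5\rangle$, $n=5$: $K_{2,2}=\Q(\sqrt5)\subseteq\Q(\zeta_5)\subseteq F_{5,5}$), so you must explicitly put $2\mid B$ — harmless, but your list of divisors of $B$ does not guarantee it; and you also silently need $\zeta_\ell\notin K$, which is fine for odd $\ell$ unramified in $K$.

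The genuine gap is in (c). The key claim $F_{n,n}\cap K_{m,m}=K$ does \emph{not} follow by ``a direct extension of the argument in (b)'': for $e\geq 2$ the normal-subgroup dichotomy in $\Gal(K_{\ell^e,\ell^e}/K)$ is false. Elements of $\Gal(K(\zeta_{\ell^e})/K(\zeta_\ell))$ act on the Kummer part by scalars $q\equiv 1\pmod\ell$, so $1-q$ is not invertible mod $\ell^e$ and the commutator trick only produces $(1-q)N\subsetneq N$; concretely, $K_{\ell,\ell}$ is a Galois subextension of $K_{\ell^e,\ell^e}/K$ which neither contains nor is contained in $K(\zeta_{\ell^e})$, so the reduction ``$L\subseteq K(\zeta_{\ell^e})$'' is unavailable. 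Moreover, the ``product structure of $\Gal(K_{m,m}/K)$ valid for $(m,B)=1$'' that you invoke to glue the prime powers is itself a nontrivial entanglement statement of exactly the kind being proved (and even granted triviality of each $F_{n,n}\cap K_{\ell^e,\ell^e}$, passing to the compositum needs a linear-disjointness argument, not just pairwise triviality). This is precisely where the paper leans on \cite[Theorem 1.1]{PST} (the structural/quasi-multiplicative form of the degree $[K_{m,n}:K]$, with the defect supported on a fixed integer $B$) in combination with (b); your sketch either has to import that theorem in full — in which case the route through $F_{n,n}\cap K_{m,m}=K$ should be replaced by the degree bookkeeping $[F_{nm,nm}:K]=[F:F\cap K_{nm,nm}]\,[K_{nm,nm}:K]$ together with a proof that $F\cap K_{nm,nm}=F\cap K_{n,n}$ — or it has to supply an actual argument for composite $m$, which the (b)-style dichotomy cannot give.
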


\begin{proof}
Assertion (a) follows from \cite[Theorem 1.1]{PS1};
assertion (b) is \cite[Proposition 3.1 (iii)]{JP};
assertion (c) follows from (b) and \cite[Theorem 1.1]{PST}.
\end{proof}

Next we state some analytic results which we will use in our proofs.
\begin{lem}\label{lem-phi}
For any $\alpha\geq1$ we have 
\[ \sum_{n>z}\frac{1}{\varphi(n)n^\alpha}=O\bigg( \frac{1}{z^\alpha} \bigg) \]
\end{lem}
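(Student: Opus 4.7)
The plan is to derive the bound via partial summation, starting from Landau's classical asymptotic
\[
S(x) := \sum_{n \leq x} \frac{1}{\varphi(n)} = A \log x + D + O\!\left( \frac{\log x}{x} \right),
\]
where $A = \zeta(2)\zeta(3)/\zeta(6)$ and $D$ is an explicit constant. For bounded $z$ the claimed bound is trivial, so I would assume $z \geq 2$. Writing $n^{-\alpha} = \alpha \int_n^\infty t^{-\alpha - 1}\, dt$ and swapping summation and integration via Fubini produces the identity
\[
\sum_{n > z} \frac{1}{\varphi(n) n^\alpha} = \alpha \int_z^\infty \frac{S(t) - S(z)}{t^{\alpha + 1}}\, dt.
\]

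The key point is to work with the difference $S(t) - S(z)$ rather than $S(t)$ alone: substituting Landau's asymptotic yields
\[
S(t) - S(z) = A \log(t/z) + O(1)
\]
uniformly for $t \geq z \geq 2$, because the constant $D$ cancels and the error $O(\log z / z)$ is absorbed. The $O(1)$ term contributes $\alpha \int_z^\infty O(1) \cdot t^{-\alpha - 1}\, dt = O(1/z^\alpha)$. For the main term, the substitution $u = \log(t/z)$ gives
\[
\int_z^\infty \frac{\log(t/z)}{t^{\alpha + 1}}\, dt = \frac{1}{z^\alpha} \int_0^\infty u\, e^{-\alpha u}\, du = \frac{1}{\alpha^2 z^\alpha},
\]
so that the full expression equals $\frac{A}{\alpha z^\alpha} + O(1/z^\alpha) = O(1/z^\alpha)$, as claimed.

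The main obstacle, although minor, is recognising that the naive bound $S(t) \ll \log t$ inserted into $\alpha \int_z^\infty S(t)\, t^{-\alpha - 1}\, dt$ only produces $O(\log z / z^\alpha)$, missing the target by a logarithmic factor. The cancellation coming from subtracting $S(z)$ -- equivalently, exploiting the precise leading constant $A$ in Landau's formula -- is what upgrades the estimate to the desired order $O(1/z^\alpha)$, with the implicit constant uniform in $\alpha \geq 1$.
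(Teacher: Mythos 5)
Your proof is correct, but it takes a genuinely different route from the paper. The paper disposes of the lemma in two lines: the case $\alpha=1$ is quoted directly from Ziegler (\cite[Lemma 7]{zieg}), and the case $\alpha>1$ is reduced to it by the trivial inequality $\sum_{n>z}\frac{1}{\varphi(n)n^\alpha}\leq \frac{1}{z^{\alpha-1}}\sum_{n>z}\frac{1}{\varphi(n)n}\ll \frac{1}{z^\alpha}$. You instead give a self-contained partial-summation argument: writing $n^{-\alpha}=\alpha\int_n^\infty t^{-\alpha-1}\,dt$, applying Tonelli (legitimate, all terms nonnegative) to get $\alpha\int_z^\infty (S(t)-S(z))t^{-\alpha-1}\,dt$, and feeding in Landau's asymptotic so that the constant cancels and the difference is $A\log(t/z)+O(1)$; the resulting integral computation is correct and yields $\frac{A}{\alpha z^\alpha}+O(z^{-\alpha})$. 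Your observation that one must work with $S(t)-S(z)$ (equivalently, exploit the cancellation of the leading logarithm) rather than the crude bound $S(t)\ll\log t$ is exactly the point that makes the estimate come out without a spurious $\log z$, and it is presumably the content of the cited lemma of Ziegler. What your route buys is independence from that reference, a single argument covering all $\alpha\geq1$ at once, uniformity of the implied constant in $\alpha$, and in fact the sharper asymptotic $\sim A/(\alpha z^\alpha)$; what the paper's route buys is brevity, at the cost of outsourcing the essential $\alpha=1$ case to the literature.
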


\begin{proof}
    The case $\alpha=1$ is \cite[Lemma 7]{zieg}. The case $\alpha>1$ is a simple consequence:
    \[ \sum_{n>z}\frac{1}{\varphi(n)n^\alpha} \leq \frac{1}{z^{\alpha-1}}\sum_{n>z}\frac{1}{\varphi(n)n} \ll \frac{1}{z^\alpha}. \qedhere \]
\end{proof}

\begin{lem}\label{lem-rankin}
    Let $m\geq1$ be a squarefree integer, let $0<\rho<1$, and $T\geq1$. We have
    \[ \sum_{\substack{a\mid m^\infty \\ a> T}} \frac{1}{a} \ll_\rho \frac{m^{1-\rho}}{T^\rho}. \]
\end{lem}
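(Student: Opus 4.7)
The plan is to apply Rankin's trick to convert the tail sum into a full Dirichlet series, and then evaluate that series as an Euler product (which is legitimate because $a$ ranges only over divisors of $m^\infty$).

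First I would observe that for every $a > T$ and every $\rho \in (0,1)$, the trivial inequality $(a/T)^\rho > 1$ gives $1/a \leq a^{\rho-1}/T^\rho$. Summing over $a \mid m^\infty$ with $a > T$ and then extending the sum to all $a \mid m^\infty$ (the extra terms are non-negative) yields
\[ \sum_{\substack{a \mid m^\infty \\ a > T}} \frac{1}{a} \;\leq\; \frac{1}{T^\rho}\sum_{a \mid m^\infty} \frac{1}{a^{1-\rho}}. \]
Next, since $m$ is squarefree, divisors of $m^\infty$ are precisely products $\prod_{p \mid m} p^{e_p}$ with $e_p \geq 0$, so the Dirichlet series factors:
\[ \sum_{a \mid m^\infty}\frac{1}{a^{1-\rho}} \;=\; \prod_{p \mid m}\frac{1}{1-p^{-(1-\rho)}} \;=\; \prod_{p \mid m}\frac{p^{1-\rho}}{p^{1-\rho}-1} \;=\; m^{1-\rho}\prod_{p \mid m}\frac{1}{p^{1-\rho}-1}, \]
where the last equality uses again that $m$ is squarefree, so $\prod_{p \mid m} p^{1-\rho} = m^{1-\rho}$.

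It remains to show that the residual product $\prod_{p \mid m}(p^{1-\rho}-1)^{-1}$ is bounded by a constant $C_\rho$ independent of $m$. I would split the primes according to the threshold $p_0(\rho) := 2^{1/(1-\rho)}$: for $p \geq p_0(\rho)$ we have $p^{1-\rho} \geq 2$, hence $(p^{1-\rho}-1)^{-1} \leq 1$, so the contribution of these primes is at most $1$; for the finitely many primes $p < p_0(\rho)$ the contribution is bounded by the absolute constant $\prod_{p < p_0(\rho)} (p^{1-\rho}-1)^{-1}$, which depends only on $\rho$. Combining these estimates gives the claimed bound $\ll_\rho m^{1-\rho}/T^\rho$.

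There is no real obstacle here; the only mild subtlety is making sure the small-prime factor is finite, which uses that $p^{1-\rho} > 1$ for every prime $p \geq 2$ whenever $\rho < 1$, so each factor $(p^{1-\rho}-1)^{-1}$ is well defined.
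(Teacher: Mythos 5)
Your argument is correct, but it takes a different route from the paper. The paper disposes of this lemma in one line: it rescales the sum, writing $\sum_{a\mid m^\infty,\, a>T}\frac{1}{a}=m\sum_{a\mid m^\infty,\, am>Tm}\frac{1}{am}$, and then invokes Lemma 3.3 of Pappalardi's paper \emph{Square free values of the order function} (cited as \cite{pappa-sf}), which already contains the required Rankin-type tail estimate for integers whose radical is $m$. You instead give a self-contained proof: Rankin's trick $1/a\leq a^{\rho-1}/T^{\rho}$ for $a>T$, followed by the Euler-product evaluation $\sum_{a\mid m^\infty}a^{-(1-\rho)}=m^{1-\rho}\prod_{p\mid m}(p^{1-\rho}-1)^{-1}$, and a correct splitting of the residual product at the threshold $p_0(\rho)=2^{1/(1-\rho)}$ (below the threshold every factor exceeds $1$, so restricting to $p\mid m$ only helps, and the finite product over all $p<p_0(\rho)$ gives a constant depending only on $\rho$). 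This is essentially the argument hidden inside the cited lemma, so nothing is lost; what your version buys is independence from the external reference and an explicit, transparent $\rho$-dependence of the implied constant, while the paper's version buys brevity by outsourcing exactly this computation.
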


\begin{proof}
    We have
    \[ \sum_{\substack{a\mid m^\infty \\ a> T}} \frac{1}{a} =  
    m\cdot\sum_{\substack{a\mid m^\infty \\ am> Tm}} \frac{1}{am} \ll_\rho  \frac{m^{1-\rho}}{T^\rho},  \]
    where we applied \cite[Lemma 3.3]{pappa-sf}.
\end{proof}

Occasionally we will also use, without mentioning explicitly, that for all $n$ sufficiently large we have
\[ \frac{n}{\varphi(n)} = \frac{\rad(n)}{\varphi(\rad(n))} \ll \log\log n. \]

\section{The main result}

Let $f,g$ be as in the Introduction, and let us recall that we aim to set convergence conditions associated to $g$ which allow us to establish the asymptotic
\[ \sum_{\substack{\p:\ \N\p\leq x \\ (\p,F/K)\subseteq C}} f(\Psi(\p)) \sim \Li(x)\sum_{n\geq1} \frac{g(n)c(n)}{[F_{n,n}:K]}, \]
where we assume that
\begin{equation}\label{series-conv}
\sum_{n\geq1}\frac{\av{g(n)}}{[K_{n,n}:K]}<\infty. 
\end{equation}
Notice that this assumption is equivalent to each of the following assertions:
\begin{enumerate}[(i)]
    \item $\sum_{n\geq1} \frac{|g(n)|}{\varphi(n)n^r}<\infty$;
    \item $\sum_{n\geq1}\frac{\av{g(n)}c(n)}{[F_{n,n}:K]}<\infty$.
\end{enumerate}
The equivalence with (i) follows from Proposition \ref{kummer}(a), whereas for (ii), the sufficiency follows from \eqref{eq-c}, and the necessity by multiplying each term in the sum \eqref{series-conv} by $\frac{[F:K]c(n)}{[F:F\cap K_{n,n}]}\geq1$.

\begin{lem}\label{lem-bound}
Suppose that $g(n)\geq0$. We have
\[ \sum_{n\geq1} g(n)\pi_{n,C}(x) \geq \bigg(\sum_{n\geq1} \frac{g(n)c(n)}{[F_{n,n}:K]} -o(1)\bigg)\Li(x). \]
\end{lem}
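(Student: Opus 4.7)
The plan is to truncate the outer sum at a threshold $y = y(x)$ growing slowly with $x$, exploit the nonnegativity of $g$ to discard the tail in $n$, and apply Theorem \ref{thm-cheb} uniformly to the surviving terms.

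Concretely, since $g(n), \pi_{n,C}(x) \geq 0$ one has
\[ \sum_{n \geq 1} g(n)\, \pi_{n,C}(x) \;\geq\; \sum_{n \leq y} g(n)\, \pi_{n,C}(x). \]
I would fix any $b > 1$ and take $y := c_1 \bigl( \log x / (\log\log x)^{b} \bigr)^{1/(r+2)}$, which is the largest scale for which Theorem \ref{thm-cheb} is available. That theorem then yields, uniformly in $n \leq y$,
\[ \pi_{n,C}(x) = \frac{c(n)}{[F_{n,n}:K]}\, \Li(x) + O\!\left( \frac{x}{\log x}\, e^{-c_2(\log\log x)^{b}} \right). \]

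Summing against $g(n)$ over $n \leq y$ and completing the main term to the full series, the portion discarded from the main term is
\[ \Li(x) \sum_{n > y} \frac{g(n)\, c(n)}{[F_{n,n}:K]} \;=\; o(\Li(x)), \]
by the standing hypothesis \eqref{series-conv} in the equivalent form (ii). The accumulated Chebotarev error is bounded by $\bigl( \sum_{n \leq y} g(n) \bigr)$ times $(x/\log x)\, e^{-c_2(\log\log x)^{b}}$. To handle the first factor I would use the trivial estimate $[K_{n,n}:K] \leq \varphi(n) n^r \leq n^{r+1}$ together with the convergence \eqref{series-conv} to write
\[ \sum_{n \leq y} g(n) \;\leq\; \Bigl( \max_{n \leq y} [K_{n,n}:K] \Bigr) \sum_{n \geq 1} \frac{g(n)}{[K_{n,n}:K]} \;\ll\; y^{r+1}. \]

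Since $y^{r+1} \ll (\log x)^{(r+1)/(r+2)}$ while for any $b > 1$ the factor $e^{-c_2(\log\log x)^{b}}$ decays faster than every fixed negative power of $\log x$, the total error is $o(\Li(x))$, and assembling the three estimates yields the claim. The main technical point is the calibration of $y$: it must be large enough that the tail of the convergent series \eqref{series-conv} is negligible, yet small enough that the polynomial factor $y^{r+1}$ is absorbed by the super-polynomial saving in the Chebotarev error; the choice $y \asymp (\log x / (\log\log x)^{b})^{1/(r+2)}$ realizes both requirements at once.
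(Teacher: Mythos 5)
Your proof is correct and follows essentially the same route as the paper: truncate at $y = c_1(\log x/(\log\log x)^b)^{1/(r+2)}$ with $b>1$, drop the tail by nonnegativity, apply Theorem \ref{thm-cheb} uniformly for $n\leq y$, control the missing main-term tail via \eqref{series-conv}, and absorb the accumulated Chebotarev error (polynomial in $y$, hence a power of $\log x$) into the factor $e^{-c_2(\log\log x)^b}$. The only cosmetic differences are that you invoke the equivalent form (ii) of \eqref{series-conv} where the paper uses Proposition \ref{kummer}(a), and you bound $\sum_{n\leq y} g(n)\ll y^{r+1}$ via the maximal degree rather than the pointwise bound $g(n)\ll n^{2}$ used in the paper.
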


\begin{proof}
Let $y:=c_1\big( \frac{\log x}{(\log\log x)^{b}}\big)^{\frac{1}{r+2}}$ with $b>1$. Applying Theorem \ref{thm-cheb} for $n\leq y$ and Proposition \ref{kummer}(a), we have
\begin{align*}
\sum_{n\geq1} g(n)\pi_{n,C}(x) & \geq \sum_{n\leq y} g(n)\pi_{n,C}(x) \\
& = \Li(x)\bigg(\sum_{n\geq1}\frac{g(n)c(n)}{[F_{n,n}:K]} -O_{F,K,G}\bigg( \sum_{n>y}\frac{g(n)}{\varphi(n)n^ r} + \frac{y^3}{e^{c_2(\log\log x)^b}} \bigg)\bigg). 
\end{align*}
The two terms in the error term are $o(1)$ because of the assumption \eqref{series-conv}. Indeed, for the second term, it follows from \eqref{series-conv} that $g(n)\ll n^2$, and for the limit we also recall that $b>1$.
\end{proof}

\begin{thm}\label{thm_uncond}
Suppose that the function $g$  satisfies the estimate
\begin{equation}\label{eq-conv-new}
    \sum_{n>z} \frac{\av{g(n)}}{\varphi(n)} \ll \frac{1}{z^\kappa}
\end{equation}
for some $0<\kappa\leq2$.
Then
\[ \sum_{n\geq1} g(n)\pi_{n,C}(x) = \Li(x)\sum_{n\geq1} \frac{g(n)c(n)}{[F_{n,n}:K]}+O\left( x\left( \frac{(\log\log x)^{2}}{\log x} \right)^{1+\frac{3\kappa}{2(r+2)}} \right). \]
The constant implied by the error term depends only on $F,K,G,\kappa$ and possibly any other parameters on which the constant implied by \eqref{eq-conv-new} might depend.
\end{thm}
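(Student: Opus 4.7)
The plan is to partition the sum $\sum_{n\ge 1}g(n)\pi_{n,C}(x)$ into three ranges, each handled by a different tool: Theorem \ref{thm-cheb} (Chebotarev for $F_{n,n}/K$) in the small range, Proposition \ref{prop-BT} (Brun--Titchmarsh for $F_{n,n}/K$) in the middle range, and the classical Brun--Titchmarsh for rational primes in the progression $p\equiv 1\pmod n$ in the large range. The natural thresholds are
\[
y_1:=c_1\left(\frac{\log x}{(\log\log x)^b}\right)^{1/(r+2)},\qquad
y_2:=c_3\left(\frac{\log x}{(\log\log x)^{1+b}}\right)^{1/(r+1)},
\]
where $b>0$ is an auxiliary parameter tuned at the end to produce the $(\log\log x)^2$ factor of the stated error.

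For $n\le y_1$ I would apply Theorem \ref{thm-cheb} to each $\pi_{n,C}(x)$; summing against $g(n)$ produces the truncated main term $\Li(x)\sum_{n\le y_1}g(n)c(n)/[F_{n,n}:K]$ plus a Chebotarev error $\ll x e^{-c_2(\log\log x)^b}\sum_{n\le y_1}|g(n)|/\log x$. The polynomial bound $|g(n)|\ll n^{r+1}$, forced by \eqref{series-conv} and $[K_{n,n}:K]\gg\varphi(n)n^r$ from Proposition \ref{kummer}(a), together with $y_1^{r+2}\asymp\log x/(\log\log x)^b$, makes this error decay faster than any power of $\log x$, hence negligible. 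Completing the truncation to the full main term costs the tail
\[
\Li(x)\sum_{n>y_1}\frac{g(n)c(n)}{[F_{n,n}:K]}\ \ll\ \Li(x)\sum_{n>y_1}\frac{|g(n)|}{\varphi(n)n^r}\ \ll\ \frac{\Li(x)}{y_1^{r+\kappa}},
\]
by \eqref{eq-c}, Proposition \ref{kummer}(a), the trivial estimate $n^{-r}\le y_1^{-r}$, and the hypothesis \eqref{eq-conv-new}.

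For $y_1<n\le y_2$, Proposition \ref{prop-BT} yields $\pi_{n,C}(x)\ll\Li(x)/(\varphi(n)n^r)$; the same tail estimate bounds the contribution by $\Li(x)/y_1^{r+\kappa}$. For $n>y_2$, Proposition \ref{prop-BT} no longer applies, so I would use that $n\mid\ind_\p(G)$ forces $\p$ to lie over a rational prime $p\equiv 1\pmod n$: classical Brun--Titchmarsh then gives $\pi_{n,C}(x)\ll x/(\varphi(n)\log x)$ uniformly for $n\le x^{1/2}$, and \eqref{eq-conv-new} collapses the corresponding sum to $\ll\Li(x)/y_2^\kappa$; the residual range $n>x^{1/2}$ is polynomially smaller via the trivial bound $\pi_{n,C}(x)\ll x/n$ combined with $|g(n)|/n\le |g(n)|/\varphi(n)$ and \eqref{eq-conv-new}.

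The main obstacle is the final optimization. One must substitute the explicit shapes of $y_1,y_2$, weigh the two dominant errors $\Li(x)/y_1^{r+\kappa}$ (main-series tail together with the middle range) against $\Li(x)/y_2^\kappa$ (classical Brun--Titchmarsh beyond $y_2$), and choose $b$ so that every $\log\log x$ factor from Theorem \ref{thm-cheb}, Proposition \ref{prop-BT}, and \eqref{eq-conv-new} assembles into the single weight $((\log\log x)^2/\log x)^{1+3\kappa/(2(r+2))}$ of the statement. The Chebotarev sub-exponential error is the easiest to absorb; the genuine competition is between the tail of the main series above $y_1$ and the Brun--Titchmarsh contribution beyond $y_2$, and the exponent $3\kappa/(2(r+2))$ should emerge from balancing these against the admissibility constraints $y_1\le(\log x)^{1/(r+2)}$ and $y_2\le(\log x)^{1/(r+1)}$.
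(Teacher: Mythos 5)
Your decomposition (Chebotarev up to $y_1$, Brun--Titchmarsh for the Kummer extensions in a middle range, classical Brun--Titchmarsh up to $\sqrt{x}$, trivial bound beyond) is structurally the paper's argument, and your treatment of the small range, of the tail of the main series, and of the ranges $(y_2,\sqrt{x}]$ and $(\sqrt{x},x]$ is sound. The gap is in the middle range, and it is exactly where the stated exponent comes from. You apply Proposition \ref{prop-BT} to the full rank-$r$ extension $F_{n,n}/K$, which confines the middle range to $n\le y_2\asymp(\log x/(\log\log x)^{1+b})^{1/(r+1)}$; consequently the classical Brun--Titchmarsh contribution beyond $y_2$ is $\ll \frac{x}{\log x}\,y_2^{-\kappa}\asymp \frac{x}{\log x}(\log x)^{-\kappa/(r+1)}$ up to $\log\log$ factors. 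Since $\frac{\kappa}{r+1}<\frac{3\kappa}{2(r+2)}$ whenever $r\ge 2$, this is strictly weaker than the claimed error term, and no tuning of the auxiliary parameter $b$ can repair it: $b$ only moves powers of $\log\log x$, never the exponent of $\log x$. So your plan proves the theorem only for $r=1$; for $r\ge2$ it yields a weaker (though still nontrivial) error term, and the ``balancing'' you defer to the end cannot produce $3\kappa/(2(r+2))$ under the admissibility constraint $y_2\le(\log x)^{1/(r+1)}$.

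The missing idea is to give up the rank-$r$ degree in the middle range in exchange for a much longer range: bound $\pi_{n,C}(x)\le\pi_n(x)\le\pi_{K(\zeta_n,\alpha^{1/n})/K}(x)$ for a single fixed $\alpha\in G\setminus\{1\}$, and apply Proposition \ref{prop-BT} to this \emph{rank-one} Kummer extension (with $b=1$), which is admissible for all $n\le w:=c_3\sqrt{\log x}/\log\log x\asymp y^{(r+2)/2}$. The resulting bound $\ll \Li(x)\sum_{n>y}\av{g(n)}/(n\varphi(n))\ll \Li(x)\,y^{-(1+\kappa)}$ is affordable because $1+\kappa\ge \tfrac32\kappa$ for $\kappa\le 2$ (this is precisely where the hypothesis $\kappa\le2$ enters, which your draft never uses), and the classical Brun--Titchmarsh range now starts only at $w$, giving $\ll\frac{x}{\log x}w^{-\kappa}\ll\frac{x}{\log x}y^{-3\kappa/2}$; combining these yields the exponent $1+\frac{3\kappa}{2(r+2)}$ of the statement. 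Two smaller points: your polynomial bound on $\av{g(n)}$ should be drawn from \eqref{eq-conv-new} itself (which gives $\av{g(n)}\ll\varphi(n)\le n$), and the Chebotarev error is negligible against powers of $\log x$ only if you take $b>1$ (the paper takes $b=2$), which you should fix explicitly rather than leave as a parameter.
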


\begin{proof}
Let us set
\[ y:=c_1\left( \frac{\log x}{(\log\log x)^{2}} \right)^{\frac{1}{r+2}}, \quad w:=c_3 \frac{\sqrt{\log x}}{\log\log x}.  \]
As in the proof of Lemma \ref{lem-bound} we have
\begin{align}
\sum_{n\geq1} g(n)\pi_{n,C}(x) =& \Li(x)\sum_{n\geq1}\frac{g(n)c(n)}{[F_{n,n}:K]} + \sum_{n> y} g(n)\pi_{n,C}(x) \label{eq2-pappa}\\
  & + \Li(x)\cdot O_{F,K,G}\bigg( \sum_{n>y}\frac{\av{g(n)}}{\varphi(n)n^ r} + \frac{y^3}{e^{c_2(\log\log x)^2}} \bigg). \label{eq1-pappa}
\end{align}
By the assumption, the first error term resulting from \eqref{eq1-pappa} is $O(\frac{x}{y^{1+\kappa}\log x})$,
and the second error term is negligible compared to this. The second term in \eqref{eq2-pappa} is bounded by
\[ \sum_{n>y} \av{g(n)}\pi_n(x), \]
which we split over the three intervals determined by $y\leq w\leq \sqrt{x}\leq x$. On the interval $[y,w]$ we make use of Proposition \ref{prop-BT} with $b=r=1$ taking into account that $\pi_n(x)\leq \pi_{K(\zeta_n,\alpha^{1/n})/K}(x)$, where $\alpha\in G\setminus\{1\}$. We have
\[ \sum_{y<n\leq w} \av{g(n)}\pi_n(x) \ll_{K,\alpha} \Li(x)\sum_{n>y} \frac{\av{g(n)}}{n\varphi(n)}\ll \frac{x}{y^{1+\kappa}\log x} , \]
in view of our assumption. On the interval $[w,\sqrt{x}]$ we make use of the Brun--Titchmarsh theorem, taking into account that $\pi_n(x)\ll_K\sharp\{p:p\equiv1\bmod n\}$:
\[ \sum_{w<n\leq \sqrt{x}} \av{g(n)}\pi_n(x) 
\ll_K \sum_{w<n\leq \sqrt{x}} \frac{\av{g(n)}x}{\varphi(n)\log(x/n)}
\ll \frac{x}{\log x}\sum_{n>w} \frac{\av{g(n)}}{\varphi(n)}\ll \frac{x}{w^\kappa\log x} . \]
 Finally, on the interval $[\sqrt{x},x]$ we make use of the trivial estimate $\pi_n(x)\ll_K\sharp\{m:n\mid m-1\}$, yielding
\[ \sum_{n> \sqrt{x}} \av{g(n)}\pi_n(x) \ll_K x\sum_{n>\sqrt{x}} \frac{\av{g(n)}}{n}\ll \frac{x}{x^{\kappa/2}}
\ll_{\kappa,e} \frac{x}{y^{e}\log x}, \]
for any $e>0$, where we used that $\varphi(n)\leq n$.

We want to choose a value for $e>0$ in such a way that all terms are bounded by $\frac{x}{y^{e}\log x}$, allowing us to conclude. 
Since $w=y^{\frac{r+2}{2}}$, we may bound $w^{-\kappa}\ll y^{-\frac{3}{2}\kappa}$.
As long as $\kappa\leq 2$ we have $\kappa+1\geq \frac{3}{2}\kappa$, whence we may choose $e=\frac{3}{2}\kappa$. Notice that in all three bounds the implied constant may depend on $\kappa$ or other parameters if this is the case in \eqref{eq-conv-new}.
\end{proof}

\begin{rem}
Let $y,w$ be as in the proof of Theorem \ref{thm_uncond}. It might be useful to point out that in order to reach the same conclusion of the theorem it suffices to have the following three estimates for $y,w$ as above:
\begin{equation}\label{eq_conds}
\sum_{n>y}\frac{\av{g(n)}}{\varphi(n)n}\ll \frac{1}{y^{3\kappa/2}}, \qquad 
\sum_{n>w}\frac{\av{g(n)}}{\varphi(n)}\ll \frac{1}{y^{3\kappa/2}}, \qquad 
\sum_{n>\sqrt{x}}\frac{\av{g(n)}}{n}\ll \frac{1}{y^{3\kappa/2}\log x}.
\end{equation}
These might be less restrictive than requiring \eqref{eq-conv-new} to hold for all $z$ large enough.

Moreover, if in Theorem \ref{thm_uncond} we replace the conditions \eqref{eq-conv-new} or \eqref{eq_conds} with the assumptions that the series $\sum_{n\geq1}|g(n)|/\varphi(n)$ converges and that $\sum_{n\geq z}|g(n)|/n=o(1/\log z)$, similarly as in \cite[Theorem 1(b)]{pappah} we may still deduce the asymptotic
\[ \sum_{n\geq1} g(n)\pi_{n,C}(x) \sim \Li(x)\sum_{n\geq1} \frac{g(n)c(n)}{[F_{n,n}:K]}. \]
Indeed, in this case we may bound the series $\sum_{n>y} \av{g(n)}\pi_n(x)$ over the intervals determined by $y\leq\sqrt{x}\leq x$. Over $[y,\sqrt{x}]$, applying the Brun--Titchmarsh Theorem we have
\[ \sum_{y<n\leq \sqrt{x}} \av{g(n)}\pi_n(x) \ll \frac{x}{\log x}\sum_{n>y}\frac{\av{g(n)}}{\varphi(n)}=o\Big( \frac{x}{\log x} \Big). \]
Over the interval $[\sqrt{x},x]$ we have
\[ \sum_{n> \sqrt{x}} \av{g(n)}\pi_n(x) \ll x\sum_{n>\sqrt{x}} \frac{\av{g(n)}}{n}
=o\Big( \frac{x}{\log x} \Big). \]
\end{rem}

\section{The index in a given set of integers}
In this section, we apply Theorem \ref{thm_uncond} for $f=\chi_S$, where $S$ is a set of positive integers and $\chi_S$ is its characteristic function, so that, recalling the definition of the set $\mc P_{S,C}$ from Sec.\ \ref{sec-not} and the discussion in Sec.\ \ref{sec-setup}, we have
\begin{equation}\label{eq-piS}
\pi_{\mc P_{S,C}}(x)=\sum_{\substack{\p:\ \N\p\leq x \\ (\p,F/K)\subseteq C}}\chi_S(\Psi(\p))=\sum_{n\geq1}g_S(n)\pi_{n,C}(x)
\end{equation}
where
\begin{equation}\label{eq-g}
g_S(n)=\sum_{d\mid n}\mu(n/d)\chi_S(d)=\sum_{\substack{d\mid n \\ d\in S}} \mu(n/d).
\end{equation}
Thus we want to establish conditions on the set $S$ such that we have the asymptotic
\[ \pi_{\mc P_{S,C}}(x) \sim \Li(x) \sum_{n\geq1} \frac{g_S(n)c(n)}{[F_{n,n}:K]}. \]

\begin{rem}\label{rem-dens}
Let $S$ be a set of positive integers such that the function $g_S$ satisfies the convergence condition of Theorem \ref{thm_uncond}. Then, in view of \eqref{eq-piS}, the above asymptotic formula holds, and the set of primes $\p$ of $K$ such that $\ind_\p(G)\in S$ and $(\p,F/K)\subseteq C$ has density 
\[
\dens(\mc P_{S,C}) = 
\sum_{n\geq1} \frac{g_S(n)c(n)}{[F_{n,n}:K]}
=\sum_{n\geq1} \sum_{\substack{d\mid n \\ d\in S}} \frac{\mu(n/d)c(n)}{[F_{n,n}:K]}.
\]
\end{rem}

\subsection{Sets which are cut or almost cut by valuations}

In this section we consider sets of positive integers which are defined through conditions on the $\ell$-adic valuations of the elements, where $\ell$ runs through all prime numbers. 

For an integer $Q>1$  we consider the $Q$-adic valuation $v_Q:\NN\to\prod_{\ell\mid Q}\Z_{\geq0}$ mapping $m$ to the tuple $(v_\ell(m))_{\ell\mid Q}$.
Let $H$ be a set of positive integers, and denote by $H_Q$ the preimage under $v_Q$ of $v_Q(H)$ (we set $H_1:=\NN$). We also set $V_\ell:=v_\ell(H)$ for any prime $\ell$ (notice that $0\in V_\ell$ for all  $\ell$, except finitely many).
We say that $H$ is \emph{cut by valuations} if $H=\bigcap_\ell H_{\ell}$, and \emph{almost cut by valuations} if 
\begin{equation}\label{eq-alm}
    H=H_{Q_0}\cap \bigcap_{\ell\nmid Q_0}H_\ell
\end{equation}
for some $Q_0\geq1$. 
In particular, writing $n=ab$ with $a\mid Q_0^\infty$ and $(b,Q_0)=1$, it follows from the definition that $n\in H$ if and only if $a\in H_{Q_0}$ and $\ell^{v_\ell(b)}\in H_\ell$ for every $\ell\nmid Q_0$.
The above definitions were introduced in a more general context in \cite[Section 3.1]{JPS}.

\begin{lem}\label{lem_mult}
Let $H\subset \NN$. If $H$ is cut by valuations and $1\in H$, then the characteristic function $\chi_H$ and the function $g_H$ are both multiplicative. If $H$ is almost cut by valuations with $Q_0\geq1$ as in \eqref{eq-alm}, then, writing $n=ab$ with $a\mid Q_0^\infty$ and $(b,Q_0)=1$, we have
\begin{equation}\label{eq-chiH}
    \chi_H(n)=\chi_{H_{Q_0}}(n)\prod_{\ell\nmid Q_0}\chi_{H_\ell}(n)
= \chi_{H_{Q_0}}(a) \prod_{\ell\nmid Q_0}\chi_{H_\ell}(\ell^{v_\ell(b)}), 
\end{equation}
\begin{equation}\label{eq-gH}
    g_H(n)= g_{H_{Q_0}}(a) \prod_{\ell\nmid Q_0}g_{H_\ell}(\ell^{v_\ell(b)}).
\end{equation}
We have $\chi_{H_\ell}(1)=g_{H_\ell}(1)=1$ for all $\ell\nmid Q_0$ and such that $0\in V_\ell$.
Moreover, \begin{itemize}
    \item if for all $\ell\nmid Q_0$ we have $0\in V_\ell$, then for all $a\mid Q_0^\infty$ we have $\chi_{H_{Q_0}}(a)=\chi_{H}(a)$ and $g_{H_{Q_0}}(a)=g_{H}(a)$;
    \item  if $1\in H$, then for all $\ell\nmid Q_0$ and $v\geq0$ we have $\chi_{H_{\ell}}(\ell^v)=\chi_{H}(\ell^v)$ and $g_{H_{\ell}}(\ell^v)=g_{H}(\ell^v)$.
\end{itemize}
\end{lem}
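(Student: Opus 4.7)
The plan is to derive everything from the definitions $H_\ell=v_\ell^{-1}(V_\ell)$ and $H_{Q_0}=v_{Q_0}^{-1}(v_{Q_0}(H))$, together with the fact that $g_H=\chi_H\ast\mu$ is a Dirichlet convolution, so multiplicativity of $g_H$ is automatic once we have it for $\chi_H$.

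For the first assertion I would write $\chi_H(n)=\prod_\ell \chi_{V_\ell}(v_\ell(n))$, use $1\in H$ to get $0\in V_\ell$ and hence $\chi_{V_\ell}(0)=1$ for all $\ell$ (which both makes the product effectively finite and ensures $\chi_H(1)=1$), and then verify coprime multiplicativity factor by factor from $v_\ell(mn)=v_\ell(m)+v_\ell(n)$, using that for coprime $m,n$ at least one of the two summands vanishes at each $\ell$.

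For the almost-cut case I would first establish \eqref{eq-chiH}: the first equality is just \eqref{eq-alm}, while for the second, $(b,Q_0)=1$ forces $v_{Q_0}(n)=v_{Q_0}(a)$, hence $\chi_{H_{Q_0}}(n)=\chi_{H_{Q_0}}(a)$, and for $\ell\nmid Q_0$ the valuation $v_\ell(n)$ sees only $b$, so $v_\ell(n)=v_\ell(b)=v_\ell(\ell^{v_\ell(b)})$. For \eqref{eq-gH} I would parametrize a divisor $d\mid n$ uniquely as $d=\delta\beta$ with $\delta\mid a$ and $\beta\mid b$, use $\mu(n/d)=\mu(a/\delta)\mu(b/\beta)$, and observe that the condition $d\in H$ decouples into $\delta\in H_{Q_0}$ together with independent $\ell$-adic conditions on $\beta$ for each $\ell\nmid Q_0$. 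The resulting double sum factorizes cleanly into $g_{H_{Q_0}}(a)$ times $\prod_{\ell\mid b}g_{H_\ell}(\ell^{v_\ell(b)})$, which I would extend to a product over all $\ell\nmid Q_0$ by inserting factors $g_{H_\ell}(1)$. This same computation yields the auxiliary identities $\chi_{H_\ell}(1)=g_{H_\ell}(1)=\chi_{V_\ell}(0)=1$ when $0\in V_\ell$.

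The two bulleted specializations would then follow by applying \eqref{eq-chiH} and \eqref{eq-gH} either with $b=1$ (taking $n=a$) or with $a=1$ (taking $n=\ell^v$), so that the hypotheses kill every factor on the right-hand side except one: in the first case the assumption $0\in V_\ell$ for all $\ell\nmid Q_0$ makes each factor in the $\ell$-product equal to $1$, and in the second case $1\in H$ implies $1\in H_{Q_0}$ hence $\chi_{H_{Q_0}}(1)=g_{H_{Q_0}}(1)=1$, while $v_{\ell'}(\ell^v)=0$ for $\ell'\neq\ell$ collapses the product to its $\ell$-factor. The main obstacle is not conceptual but clerical, namely keeping careful track of which valuation conditions are carried by $a$ versus $b$ and interpreting the infinite product over $\ell\nmid Q_0$ correctly; once the bookkeeping is set up, each assertion reduces to a single-line verification.
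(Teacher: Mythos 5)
Your proposal follows essentially the same route as the paper: multiplicativity of $\chi_H$ is checked prime by prime using $0\in V_\ell$ (which follows from $1\in H$), $g_H=\chi_H*\mu$ is then multiplicative for free, \eqref{eq-chiH} is the definition \eqref{eq-alm} unwound along the decomposition $n=ab$, \eqref{eq-gH} comes from factoring the M\"obius sum \eqref{eq-g} over that decomposition, and the two bullets are the specializations $b=1$ and $a=1$. The one step to phrase more carefully is ``factorizes cleanly into $g_{H_{Q_0}}(a)$ times $\prod_{\ell\mid b}g_{H_\ell}(\ell^{v_\ell(b)})$, \dots\ extend \dots\ by inserting factors $g_{H_\ell}(1)$'': the identity \eqref{eq-gH} is asserted without assuming $0\in V_\ell$ for all $\ell\nmid Q_0$ (that hypothesis enters only in the first bullet), and for a prime $\ell\nmid Q_0 b$ with $0\notin V_\ell$ one has $g_{H_\ell}(1)=\chi_{H_\ell}(1)=0$, so these factors are not harmless insertions of $1$; they are precisely the (possibly unsatisfiable) decoupled conditions at the primes not dividing $b$, and must be carried from the decoupling itself --- then in that degenerate case both sides of \eqref{eq-gH} vanish and the identity still holds. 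The paper sidesteps this by substituting \eqref{eq-chiH} into \eqref{eq-g} and factoring the divisor sum over all $\ell\nmid Q_0$ simultaneously, which is the cleanest way to say what you intend. Similarly, in the second bullet the collapse of the product to its $\ell$-factor uses $0\in V_{\ell'}$ for every $\ell'\neq\ell$ with $\ell'\nmid Q_0$, which does follow from $1\in H$ and is worth stating explicitly; with these small adjustments your argument is complete.
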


\begin{proof}
Suppose that $H$ is cut by valuations and $1\in H$. Clearly we have $\chi_H(1)=1$.  Let $a,b\in \NN$ be coprime. Then $a,b\in H$ is equivalent to $a,b\in H_{\ell}$ for all $\ell$. Since, for all $\ell$, either $v_\ell(a)$ or $v_\ell(b)$ is $0$, the previous statement is equivalent to $v_\ell(ab)\in V_\ell$. In other words, we have $ab\in H_{\ell}$ for all $\ell$, and hence $ab\in H$.
Since the Dirichlet convolution of two multiplicative functions is multiplicative, the function $g_H=\chi_H*\mu$ is multiplicative. 

Suppose that $H$ is almost cut by valuations with $Q_0$ as above. The first identity in \eqref{eq-chiH} follow directly from \eqref{eq-alm}, and the second identity follows from the subsequent remark.
For \eqref{eq-gH}, by \eqref{eq-g} and \eqref{eq-chiH} we have
 \[ g_H(n)  =\sum_{d_1\mid a}\mu\Big(\frac{a}{d_1}\Big)\chi_{H_{Q_0}}(d_1)
    \cdot\prod_{\ell\nmid Q_0}\sum_{d_\ell\mid \ell^{v_\ell(b)}}\mu\Big(\frac{\ell^{v_\ell(b)}}{d_\ell}\Big)\chi_{H_\ell}(d_\ell)
    =  g_{H_{Q_0}}(a) \prod_{\ell\nmid Q_0}g_{H_\ell}(\ell^{v_\ell(b)}). \]
If for all $\ell\nmid Q_0$ we have $0\in V_\ell$, then $v_\ell(a)=0\in V_\ell$ for every $\ell\nmid Q_0$.  If $1\in H$ and $\ell\nmid Q_0$, then $v_{Q_0}(\ell^v)=0\in v_{Q_0}(H)$ and  $v_q(\ell^v)=0\in V_q$ for all $q\nmid \ell Q_0$. The identities involving $g_H$ then follow from \eqref{eq-gH} and the definition \eqref{eq-g}.
\end{proof}

\begin{lem}\label{lem_g}
Let $H\subset\NN$. We have $g_H(1)=\chi_H(1)$, and for all primes $\ell$ and all $m\geq1$, we have
\begin{equation}\label{eq_gmult2}
g_{H_\ell}(\ell^m)=\chi_{H_\ell}(\ell^m)-\chi_{H_\ell}(\ell^{m-1})=\begin{cases}
-1 & \text{if }  m\notin V_\ell \text{ and } m-1\in V_\ell \\
1 & \text{if } m\in V_\ell \text{ and } m-1\notin V_\ell\\
0 & \text{otherwise}.
\end{cases}
\end{equation}
If $H$ is almost cut by valuations with $Q_0$ as in \eqref{eq-alm} and $1\in H$, then for all primes $\ell\nmid Q_0$ the identities \eqref{eq_gmult2} hold with $g_{H_\ell}$ and $\chi_{H_\ell}$ replaced by $g_H$ and $\chi_H$, respectively.
\end{lem}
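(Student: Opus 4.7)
The identity $g_H(1)=\chi_H(1)$ is immediate from the definition \eqref{eq-g}, since the only divisor of $1$ is $1$ itself and $\mu(1)=1$. The plan for the main identity is a direct computation from \eqref{eq-g} applied to the prime power $\ell^m$: we have
\[ g_{H_\ell}(\ell^m)=\sum_{k=0}^m\mu(\ell^{m-k})\chi_{H_\ell}(\ell^k), \]
and since $\mu(\ell^j)=0$ for every $j\geq 2$, only the two summands $k=m$ and $k=m-1$ survive, yielding $g_{H_\ell}(\ell^m)=\chi_{H_\ell}(\ell^m)-\chi_{H_\ell}(\ell^{m-1})$. This establishes the first equality in \eqref{eq_gmult2}.

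Next, I would unwind the definition of $H_\ell$: since $H_\ell=v_\ell^{-1}(V_\ell)$, for any integer $j\geq0$ we have $\ell^j\in H_\ell$ if and only if $v_\ell(\ell^j)=j\in V_\ell$, so $\chi_{H_\ell}(\ell^j)=1$ exactly when $j\in V_\ell$ and $0$ otherwise. Substituting this characterization into $\chi_{H_\ell}(\ell^m)-\chi_{H_\ell}(\ell^{m-1})$ gives a four-case table (depending on whether each of $m,m-1$ belongs to $V_\ell$), three of whose branches simplify to $-1$, $1$, or $0$ as in the statement; the case $m,m-1\in V_\ell$ and the case $m,m-1\notin V_\ell$ both collapse to the ``otherwise'' branch with value $0$. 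This completes the proof of \eqref{eq_gmult2}.

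For the final assertion, suppose $H$ is almost cut by valuations with $Q_0$ as in \eqref{eq-alm} and $1\in H$, and let $\ell\nmid Q_0$. By the second bullet of Lemma \ref{lem_mult}, we have $\chi_{H_\ell}(\ell^v)=\chi_H(\ell^v)$ and $g_{H_\ell}(\ell^v)=g_H(\ell^v)$ for every $v\geq 0$. Replacing the relevant functions in \eqref{eq_gmult2} by these equal values yields the desired identity with $g_H$ and $\chi_H$ in place of $g_{H_\ell}$ and $\chi_{H_\ell}$.

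There is no real obstacle here: the entire argument is a direct expansion of the Möbius-type definition \eqref{eq-g} combined with the vanishing of $\mu$ on non-squarefree integers and a straightforward reduction to Lemma \ref{lem_mult} for the last part. The only step worth writing out with care is the case analysis, to confirm that both ``both in $V_\ell$'' and ``neither in $V_\ell$'' fall into the ``otherwise'' branch of \eqref{eq_gmult2}.
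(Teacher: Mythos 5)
Your proof is correct and follows the same route as the paper: expanding the Möbius-convolution definition \eqref{eq-g} at a prime power (where only the top two divisors contribute), using that $\ell^j\in H_\ell$ if and only if $j\in V_\ell$ for the case analysis, and invoking the second bullet of Lemma \ref{lem_mult} for the final assertion. Your write-up simply makes explicit the steps the paper leaves implicit.
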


\begin{proof}
The first characterization follows from the formula \eqref{eq-g}, whereas the second one is a consequence of the first one, as $\ell^v\in H_\ell$ if and only if $v\in V_\ell$. The last statement follows from Lemma \ref{lem_mult}.
\end{proof}

\subsection{$k$-free and $k(\ell)$-free indices}

Let $S(k(\ell))$ be the set of positive integers which are $k(\ell)$-free, i.e.\ of those integers $n$ such that $v_\ell(n)< k(\ell)$, where $k(\ell)\geq2$, for all $\ell$.

\begin{thm}\label{thm_kl}
We have
\[ \pi_{\mc P_{S(k(\ell)),C}}(x) = \Li(x)\sum_{n\geq1} \frac{\mu(n)c(f_n)}{[F_{f_n,f_n}:K]}+O_{F,K,G,k}\left( x\left( \frac{(\log\log x)^2}{\log x} \right)^{1+\frac{3(k-1)}{2k(r+2)}} \right), \]
where $f_n=\prod_{\ell\mid n}\ell^{k(\ell)}$ and $k:=\min_\ell k(\ell)\geq2$. 
\end{thm}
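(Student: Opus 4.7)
The plan is to apply Theorem \ref{thm_uncond} with $f=\chi_{S(k(\ell))}$. I first observe that $S(k(\ell))$ is cut by valuations with $V_\ell=\{0,1,\dots,k(\ell)-1\}$ and $1\in S(k(\ell))$, so by Lemma \ref{lem_mult} the function $g:=g_{S(k(\ell))}$ is multiplicative. Since each $V_\ell$ is an initial segment of $\Z_{\geq0}$, Lemma \ref{lem_g} gives $g(\ell^m)=-1$ when $m=k(\ell)$ and $g(\ell^m)=0$ for every $m\geq1$ with $m\neq k(\ell)$. Hence the support of $g$ is exactly $\{f_m:m\text{ squarefree}\}$, and on this support $g(f_m)=(-1)^{\omega(m)}=\mu(m)$. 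Substituting into \eqref{eq-piS} identifies the proposed main term with the one in the statement, since $\mu(n)$ is automatically supported on squarefree integers.

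It remains to verify the convergence hypothesis \eqref{eq-conv-new}. Using the identity $\varphi(f_m)=f_m\varphi(m)/m$ for squarefree $m$, one rewrites
\[ \sum_{n>z}\frac{|g(n)|}{\varphi(n)}=\sum_{\substack{m\text{ sqfree}\\ f_m>z}}\frac{m}{f_m\varphi(m)}. \]
The key step is to split this tail at $m=z^{1/k}$. For $m\leq z^{1/k}$ (subject to $f_m>z$) I use $1/f_m<1/z$ directly, reducing the partial sum to $z^{-1}\sum_{m\leq z^{1/k}}m/\varphi(m)\ll z^{-(k-1)/k}$ by the elementary estimate $\sum_{m\leq T}m/\varphi(m)\ll T$. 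For $m>z^{1/k}$ I use the bound $f_m\geq m^k$ (which holds for squarefree $m$ because every $k(\ell)\geq k$) to reduce the summand to $1/(m^{k-1}\varphi(m))$, and Lemma \ref{lem-phi} with $\alpha=k-1\geq1$ gives $\ll z^{-(k-1)/k}$ as well. Hence \eqref{eq-conv-new} holds with $\kappa=(k-1)/k\in(0,1]$.

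Applying Theorem \ref{thm_uncond} with this $\kappa$ then delivers the stated asymptotic, since $\tfrac{3\kappa}{2(r+2)}=\tfrac{3(k-1)}{2k(r+2)}$ matches the exponent in the claimed error term. The main obstacle is the convergence step above: a single Rankin-style bound on the full range of $m$ would yield a value of $\kappa$ depending on $\sup_\ell k(\ell)$, which may be infinite, so the split at $m=z^{1/k}$ is essential in order to produce a value of $\kappa$ that depends only on $k=\min_\ell k(\ell)$.
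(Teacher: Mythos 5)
Your proposal is correct and follows essentially the same route as the paper: multiplicativity of $g_{S(k(\ell))}$ via Lemma \ref{lem_mult}, the evaluation $g(\ell^m)=-1$ exactly at $m=k(\ell)$ via Lemma \ref{lem_g}, the tail bound $\ll z^{-(1-1/k)}$ using Lemma \ref{lem-phi}, and then Theorem \ref{thm_uncond} with $\kappa=1-1/k$. Your split of the tail at $m=z^{1/k}$, treating separately the squarefree $m\leq z^{1/k}$ with $f_m>z$ (which can occur when $k(\ell)$ is unbounded), is a slightly more careful justification of the single inequality the paper writes down, but it is a refinement of the same argument rather than a different approach.
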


\begin{proof}
For $S=S(k(\ell))$, in view of \eqref{eq-piS}, it suffices to prove that $g_S$ satisfies  \eqref{eq-conv-new} for some $0<\kappa\leq2$ and apply Theorem \ref{thm_uncond}. 
Since $S$ is cut by valuations with $1\in S$, $g_S$ is multiplicative by Lemma \ref{lem_mult}, and by Lemma \ref{lem_g} we obtain $g_{S}(\ell^m)=-1$  if $m=k(\ell)$,  and $g_{S}(\ell^m)=0$ for all other $m\geq1$. Therefore, $g_{S}(n)=\mu(\rad(n))$ if and only if $n=f_n$, and it is $0$ otherwise, and by definition, we have $n^k\mid f_n$ for all $n$ squarefree. Thus, recalling that  if $d\mid n$, then $\varphi(d)\mid\varphi(n)$ and $\varphi(nd)=\varphi(n)d$, we have
\[  \sum_{n>z} \frac{\av{g_{S}(n)}}{\varphi(n)}
= \sum_{f_n>z} \frac{\mu(n)^2}{\varphi(f_n)}
\leq \sum_{n> z^{1/k}} \frac{\mu(n)^2}{\varphi(n)n^{k-1}}
\ll \frac{1}{z^{1-1/k}},
\]
where we applied Lemma \ref{lem-phi}. We may conclude by Theorem \ref{thm_uncond} with $\kappa=1-1/k$ to obtain the desired asymptotic formula.
\end{proof}

\begin{cor}\label{cor-kfree}
    Let $S(k)$ be the set of $k$-free integers for $k\geq2$. We have
    \[ \pi_{\mc P_{S(k),C}}(x) \sim \Li(x)\sum_{n\geq1} \frac{\mu(n)c(n^k)}{[F_{n^k,n^k}:K]}
   , \]
    with error term as in Theorem \ref{thm_kl}.
\end{cor}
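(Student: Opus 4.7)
The plan is to obtain Corollary \ref{cor-kfree} as a direct specialization of Theorem \ref{thm_kl}. First I would observe that the set $S(k)$ of $k$-free positive integers is precisely $S(k(\ell))$ with the constant choice $k(\ell)=k$ for every prime $\ell$; indeed, $n$ is $k$-free if and only if $v_\ell(n)<k$ for all primes $\ell$. With this identification, the minimum $\min_\ell k(\ell)$ appearing in Theorem \ref{thm_kl} is exactly $k$, so the error term transfers verbatim.

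Next I would simplify the main term. In the sum
\[
\sum_{n\geq1} \frac{\mu(n)c(f_n)}{[F_{f_n,f_n}:K]}
\]
from Theorem \ref{thm_kl} the Möbius factor $\mu(n)$ vanishes outside squarefree integers, so we may restrict the summation to squarefree $n$. For such $n$ the definition $f_n=\prod_{\ell\mid n}\ell^{k(\ell)}$ with $k(\ell)=k$ becomes $f_n=\prod_{\ell\mid n}\ell^k = n^k$. Substituting this into the sum yields precisely
\[
\sum_{n\geq1} \frac{\mu(n)c(n^k)}{[F_{n^k,n^k}:K]},
\]
which is the claimed density expression.

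Finally, the error term from Theorem \ref{thm_kl} is inherited with the same implied constants depending on $F,K,G,k$, since $k=\min_\ell k(\ell)$ in this constant case. There is no genuine obstacle here: the work has already been carried out in Theorem \ref{thm_kl}, whose proof handles the general $k(\ell)$-free setting via the convergence condition \eqref{eq-conv-new} with $\kappa=1-1/k$, and the corollary merely records the cleaner formulation when all local thresholds agree.
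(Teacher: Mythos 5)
Your proposal is correct and matches the paper's proof, which likewise just takes $k(\ell)=k$ for all $\ell$ in Theorem \ref{thm_kl}; your additional remark that $\mu(n)$ restricts the sum to squarefree $n$, for which $f_n=n^k$, is exactly the (implicit) simplification needed.
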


\begin{proof}
    It suffices to take $k(\ell)=k$ for all $\ell$ in Theorem \ref{thm_kl}.
\end{proof}

Both density formulas obtained for $k$-free and $k(\ell)$-free indices were already known under GRH, see \cite[Example 25]{JPS}.

\subsection{The index in sets which are cut by valuations}
In this section we prove two general results for the primes $\p$ of $K$ such that the index of $(G\bmod\p)$ lies in a given set $H$ which is cut by valuations. This is achieved in two cases requiring some relatively strong conditions on $H$.
Let us introduce some notation. Given a set $H$ cut by valuations,  we may write each $V_\ell$ as a disjoint  union of intervals of one of the following two types:
\begin{equation}\label{eq-vl}
    V_\ell = \bigcup_{i\geq 1} [a_i,b_i] \quad \text{ or } \quad  V_\ell = \bigcup_{i= 1}^{N_\ell} [a_i,b_i]  \cup [a_{N_\ell+1},\infty),
\end{equation}
where these intervals are intended as discrete sets of integers,  $a_i\leq b_i<a_{i+1}-1$ for all $i$, and $N_\ell\geq0$. All $a_i,b_i$ depend on $\ell$, and we have $a_1=0$ for all $\ell$ except finitely many. 
Whenever we consider $\min_\ell b_1$, we implicitly take $b_1=\infty$ if $V_\ell =[a_1,\infty)$.
With this notation, for all $\ell$ and all $m\geq0$, by Lemma \ref{lem_g} we have
\begin{equation}\label{eq-gab}
     g_{H_\ell}(\ell^m)= \begin{cases}
   -1 & \text{if } m=b_i+1 \text{ for some } i \\
   1 & \text{if } m=a_i  \text{ for some } i  \\
   0 & \text{otherwise}.
\end{cases}
\end{equation}

\begin{thm}\label{vlfin}
    Let $H$ be cut by valuations, with $1\in H$, and suppose that $V_\ell$ is given as in \eqref{eq-vl} for every $\ell$, and for the first case we suppose that the union is finite and we write $V_\ell = \bigcup_{i= 1}^{N_\ell} [a_i,b_i]$. 
    Assume that for every squarefree integer $n\geq1$ we have $\prod_{\ell\mid n}2N_\ell\ll n^\alpha$  for some $\alpha>0$, and that  $\min_\ell b_1>\alpha$.
    
    We have 
    \[ \pi_{\mc P_{H,C}}(x) = \Li(x)\sum_{n\geq1} \frac{g_H(n)c(n)}{[F_{n,n}:K]}+O_{F,K,G,k,\alpha}\left( x\left( \frac{(\log\log x)^2}{\log x} \right)^{1+\frac{1}{r+2}(1-\frac{\alpha+1}{k})} \right), \]
    where  $k:=1+\min_\ell b_1$, $g_H$ is multiplicative and, for each $\ell$ and $m\geq0$, $g_H(\ell^m)$ is equal to \eqref{eq-gab}.
\end{thm}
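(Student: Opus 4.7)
The plan is to verify hypothesis \eqref{eq-conv-new} of Theorem \ref{thm_uncond} for $g = g_H$ with a suitable exponent $\kappa > 0$, and then read off the statement by taking $f = \chi_H$ in \eqref{eq-piS}.

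First I would set up the combinatorics of $g_H$. Since $H$ is cut by valuations and $1 \in H$, Lemma \ref{lem_mult} gives that $g_H$ is multiplicative, and Lemma \ref{lem_g} yields the explicit formula \eqref{eq-gab}: for every prime $\ell$, $g_H(\ell^m) \in \{-1,0,+1\}$, with $g_H(\ell^m) \neq 0$ precisely when $m$ is an endpoint $a_i$ or a successor $b_i + 1$ of the intervals in \eqref{eq-vl}. Since $a_1 = 0$, the set of $m \geq 1$ with $g_H(\ell^m) \neq 0$ has at most $2N_\ell - 1$ elements, and its minimum value is $b_1 + 1 \geq k$.

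Next I would verify \eqref{eq-conv-new} by Rankin's trick. For $0 < \kappa < 1$, multiplicativity gives
\[ \sum_{n > z} \frac{|g_H(n)|}{\varphi(n)} \leq z^{-\kappa} \sum_{n \geq 1} \frac{|g_H(n)|\, n^\kappa}{\varphi(n)} = z^{-\kappa} \prod_\ell (1 + E_\ell(\kappa)), \]
where each Euler factor $E_\ell(\kappa) = \sum_{m} \ell^{m\kappa}/\varphi(\ell^m)$ is a sum over at most $2N_\ell$ values of $m \geq k$, each contributing at most $\ell^{-m(1-\kappa) + 1}/(\ell - 1)$, and so
\[ E_\ell(\kappa) \ll \frac{N_\ell}{\ell^{k(1-\kappa)}}. \]
Taking $n = \ell$ in the hypothesis $\prod_{\ell \mid n} 2N_\ell \ll n^\alpha$ gives $N_\ell \ll \ell^\alpha$, so $\sum_\ell E_\ell(\kappa) < \infty$ whenever $k(1-\kappa) > \alpha + 1$, that is, whenever $\kappa < 1 - (\alpha + 1)/k$; this range is nonempty precisely because $k > \alpha + 1$.

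Setting $\kappa = \tfrac{2}{3}(1 - (\alpha+1)/k) > 0$, estimate \eqref{eq-conv-new} holds with this $\kappa$ (and with implicit constant depending on $\alpha$ and $k$), and Theorem \ref{thm_uncond} applied to $f = \chi_H$ gives via \eqref{eq-piS} the asymptotic formula claimed. This choice of $\kappa$ makes the error exponent $1 + \tfrac{3\kappa}{2(r+2)}$ of Theorem \ref{thm_uncond} agree with the exponent $1 + \tfrac{1}{r+2}(1 - (\alpha+1)/k)$ stated here. The main obstacle is the Euler-factor bookkeeping: one must balance the bound $N_\ell \ll \ell^\alpha$ on the number of contributing exponents against the geometric decay $\ell^{-k(1-\kappa)}$ coming from the forced lower bound $m \geq k$, and convergence is achieved only because the hypothesis $\min_\ell b_1 > \alpha$ makes the critical threshold $1 - (\alpha+1)/k$ strictly positive.
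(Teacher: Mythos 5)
Your proposal is correct and follows the paper's overall strategy --- verify the convergence hypothesis \eqref{eq-conv-new} for $g_H$ and invoke Theorem \ref{thm_uncond} with the same exponent $\kappa=\tfrac{2}{3}\bigl(1-\tfrac{\alpha+1}{k}\bigr)$ via \eqref{eq-piS} --- but it establishes the key tail estimate by a genuinely different device. The paper argues globally: since $g_H(n)\neq0$ forces $\rad(n)^k\mid n$, it groups terms by the radical $m=\rad(n)$, bounds the number of contributing $n$ with a given radical by $e(m)=\prod_{\ell\mid m}2N_\ell\ll m^\alpha$ (this is where the full hypothesis for squarefree $n$ enters), and reduces the tail to the convergent series $\sum_{m>z^{1/k}}\log\log m/m^{k-\alpha}$. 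You instead use Rankin's trick, $\sum_{n>z}|g_H(n)|/\varphi(n)\leq z^{-\kappa}\prod_\ell\bigl(1+E_\ell(\kappa)\bigr)$, and control each Euler factor using that $g_H(\ell^m)\neq0$ with $m\geq1$ happens for at most $2N_\ell$ exponents, all of size at least $k=1+\min_\ell b_1$; convergence of $\sum_\ell E_\ell(\kappa)$ then needs only the per-prime bound $N_\ell\ll\ell^\alpha$ (the case $n=\ell$ of the hypothesis), which is formally weaker than the product bound the paper uses, and your balance condition $k(1-\kappa)>\alpha+1$ is exactly met by the chosen $\kappa$ because $k>\alpha+1$. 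Both routes give the same bound $\ll z^{-\kappa}$, hence the same error exponent $1+\tfrac{1}{r+2}\bigl(1-\tfrac{\alpha+1}{k}\bigr)$; your multiplicative version has the small advantage of capturing all terms $n>z$ uniformly without tracking the size of $\rad(n)$ relative to $z^{1/k}$. One cosmetic slip: in the second case of \eqref{eq-vl} the number of exponents $m\geq1$ with $g_H(\ell^m)\neq0$ is $2N_\ell$ rather than $2N_\ell-1$, but since your Euler-factor estimate already allows $2N_\ell$ terms this does not affect the argument.
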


\begin{proof}
The function $g_H$ is multiplicative by Lemma \ref{lem_mult}, and it is characterized by Lemma \ref{lem_g} (since $1\in H$, we have $g_H(\ell^m)=g_{H_\ell}(\ell^m)$ for all $\ell$ and $m\geq0$). In particular, $g_H(n)\neq0$ if and only if, for all $\ell$, $v_\ell(n)$ is equal to $a_i$ or $b_i+1$ for some $i$.
Therefore, for all $n$ such that $g_H(n)\neq0$ we have $\rad(n)^k\mid n$, and for a fixed $m=\rad(n)$ we may use this lower bound for at most  $e(m):=\prod_{\ell\mid m}2N_\ell$ integers.
Notice that, if $V_\ell=\Z_{\geq0}$ for some $\ell\mid m$, then $e(m)=0$ and the bound in the statement still holds. In fact, in this case we have $g_H(m)=0$.

By the above and making use of the bound $e(n)\ll n^\alpha$ for $n$ squarefree, we then have
\[  \sum_{n>z} \frac{\av{g_{H}(n)}}{\varphi(n)}
\leq \sum_{n> z^{1/k}} \frac{\mu(n)^2e(n)}{\varphi(n^k)}
\ll \sum_{n> z^{1/k}} \frac{\log\log n}{n^{k-\alpha}} ,
\]
where, since $k>1+\alpha$, the series converges and we have
\[ \sum_{n>z} \frac{\av{g_{H}(n)}}{\varphi(n)} 
\ll_\varepsilon \frac{1}{z^{1-(\alpha+1)/k-\varepsilon}}
\ll_{k,\alpha} z^{-\frac{2}{3}(1-\frac{\alpha+1}{k})}, \]
where we chose $\varepsilon=\frac{1}{3}(1-\frac{\alpha+1}{k})>0$.
Finally, in view of \eqref{eq-piS}, we may apply Theorem \ref{thm_uncond} with $\kappa=\frac{2}{3}(1-\frac{\alpha+1}{k})$.
\end{proof}

\begin{exa}
Let $H$ be cut by valuations and suppose that $V_\ell = \{0,1\}\cup\{s,s+1,\ldots\}$ for all $\ell$, where $s\geq3$. 
We have $\min_\ell b_1=1$ and $\prod_{\ell\mid n}2\leq 2\sqrt{n}$, and hence we may take $\alpha=1/2$ and $k=2>1+\alpha$.
Thus, the conditions of Theorem \ref{vlfin} are satisfied.
The map $g_H$ is multiplicative, and by \eqref{eq-gab} we have
\[ 
 g_H(\ell^m)= \begin{cases}
   -1 & \text{if } m=2 \\
   1 & \text{if } m=0,s \\
   0 & \text{otherwise}.
\end{cases}
\]
Hence $g_H(n)\neq0$ if and only if $n=a^2b^s$ for $a,b$ squarefree with $(a,b)=1$; moreover, for such $n$ we have $g_H(n) = \mu(a)$.
 We can then apply Theorem \ref{vlfin}  to obtain
\[ \pi_{\mc P_{H}}(x) = \Li(x)\sum_{a\geq1}\mu(a)\sum_{b\geq1} \frac{\mu(ab)^2}{[K_{a^2b^s,a^2b^s}:K]}+O\left( x\left( \frac{(\log\log x)^2}{\log x} \right)^{1+\frac{1}{4(r+2)}} \right). \]
\end{exa}

\begin{thm}\label{vlinf}
Let $H$ be cut by valuations, with $1\in H$. Let us suppose that $1,2\in V_\ell$  for all $\ell$. We have
\[ \pi_{\mc P_{H,C}}(x) = \Li(x)\sum_{n\geq1} \frac{g_H(n)c(n)}{[F_{n,n}:K]}+O_{F,K,G}\left( x\left( \frac{(\log\log x)^2}{\log x} \right)^{1+\frac{1}{2(r+2)}} \right), \]
where $g_H$ is multiplicative and, for each $\ell$ and $m\geq0$, $g_H(\ell^m)$ is equal to \eqref{eq-gab}.
\end{thm}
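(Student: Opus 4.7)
The plan is to verify hypothesis \eqref{eq-conv-new} of Theorem \ref{thm_uncond} for $g=g_H$ with $\kappa=1/3$; indeed, substituting $\kappa=1/3$ into the error exponent $1+3\kappa/(2(r+2))$ produces precisely $1+1/(2(r+2))$ as claimed. Since $H$ is cut by valuations with $1\in H$, Lemma \ref{lem_mult} gives that $g_H$ is multiplicative, and Lemma \ref{lem_g} identifies $g_H(\ell^m)$ with the values in \eqref{eq-gab} at every prime power.

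The crucial structural step exploits the assumption $1,2\in V_\ell$ for every $\ell$. Let $Q_0:=\prod_{\ell:\,0\notin V_\ell}\ell$; because $0\in V_\ell$ for all but finitely many $\ell$, this is a finite product. For $\ell\nmid Q_0$ one has $\{0,1,2\}\subseteq V_\ell$, so the first interval $[a_1,b_1]$ in \eqref{eq-vl} satisfies $a_1=0$ and $b_1\geq 2$. By \eqref{eq-gab} this forces $g_H(\ell)=g_H(\ell^2)=0$, and the smallest $m\geq 1$ with $g_H(\ell^m)\neq 0$ must satisfy $m\geq 3$.

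I would then estimate the tail via an Euler product. Since $|g_H(\ell^m)|\leq 1$ and $g_H$ is multiplicative, the Dirichlet series of $n\mapsto |g_H(n)|n^{1/3}/\varphi(n)$ factors as $\prod_\ell E_\ell$. For $\ell\nmid Q_0$,
\[
E_\ell \;\leq\; 1+\frac{\ell}{\ell-1}\sum_{m\geq 3}\ell^{-2m/3} \;=\; 1+\frac{\ell}{\ell-1}\cdot\frac{\ell^{-2}}{1-\ell^{-2/3}} \;=\; 1+O(\ell^{-2}),
\]
while each of the finitely many factors at $\ell\mid Q_0$ is individually finite (the geometric series in $\ell^{-2m/3}$ starting from $m=1$ still converges). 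Hence $\prod_\ell E_\ell<\infty$, and consequently
\[
\sum_{n>z}\frac{|g_H(n)|}{\varphi(n)}\;\leq\; z^{-1/3}\sum_{n\geq 1}\frac{|g_H(n)|\,n^{1/3}}{\varphi(n)}\;\ll\; z^{-1/3}.
\]
The theorem then follows from Theorem \ref{thm_uncond} applied to $f=\chi_H$ with $\kappa=1/3$, combined with the identity \eqref{eq-piS}.

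The main obstacle, compared with Theorem \ref{vlfin}, is that $V_\ell$ may now contain infinitely many intervals, so $m\mapsto g_H(\ell^m)$ can have unbounded support at each $\ell$. This is precisely where the assumption $\{1,2\}\subseteq V_\ell$ is essential: the forced gap $v_\ell(n)\geq 3$ on the support of $g_H$ (away from $Q_0$) generates the exponential decay $\ell^{-2m/3}$ in each Euler factor, which tames the contribution of the potentially unbounded number of nonzero $g_H(\ell^m)$ and thereby rescues the convergence required by Theorem \ref{thm_uncond}.
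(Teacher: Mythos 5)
Your proof is correct, and it follows the paper's overall strategy (identify $g_H$ via Lemmas \ref{lem_mult} and \ref{lem_g}, verify \eqref{eq-conv-new} with $\kappa=1/3$, and invoke Theorem \ref{thm_uncond} through \eqref{eq-piS}), but the tail estimate itself is carried out differently. The paper observes that on the support of $g_H$ one has $\rad(n)^k\mid n$ with $k\geq 3$, writes $n=am^k$ with $m$ squarefree and $a\mid m^\infty$, and bounds the inner sum over $a$ by Lemma \ref{lem-rankin} before summing over $m$ with the choice $\rho=1/3$; you instead apply Rankin's trick globally, noting that $n\mapsto |g_H(n)|n^{1/3}/\varphi(n)$ is multiplicative and that the forced gap $v_\ell(n)\geq 3$ on the support makes each Euler factor $1+O(\ell^{-2})$, so the full series converges and the tail is $\ll z^{-1/3}$. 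Your version is arguably cleaner and, importantly, your Euler-factor bound uses only $|g_H(\ell^m)|\leq1$ and the support condition $m\geq3$, so the implied constant is absolute and the stated $O_{F,K,G}$ dependence is preserved, matching what the paper's argument achieves. One small redundancy: your introduction of $Q_0=\prod_{0\notin V_\ell}\ell$ is unnecessary here, since the hypothesis $1\in H$ already forces $0=v_\ell(1)\in V_\ell$ for every $\ell$, so $Q_0=1$ and the exceptional factors you discuss never occur (they would only matter in the relaxed setting of Remark \ref{rem-relax}).
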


\begin{proof}
Set $k:=1+\min_\ell b_1\geq3$, where $b_1$ is defined by writing $V_\ell$ as in \eqref{eq-vl} for all $\ell$.
We start as in the proof of Theorem \ref{vlfin}. Here we have that $g_H(n)\neq0$ only if $n$ is divisible by $\rad(n)^k$, and we write $n=am^k$ for some $m$ squarefree and $a\mid m^\infty$.

Therefore, also recalling that $\varphi(am^k)=\varphi(m^k)a$ as $a\mid m^\infty$, we have
\[ \sum_{n>z} \frac{\av{g_{H}(n)}}{\varphi(n)} 
\leq \sum_{\substack{m\geq1, a\mid m^\infty \\ am^k> z}} \frac{\mu(m)^2 }{\varphi(am^k)}
\leq \sum_{m\geq1}\frac{1}{\varphi(m^k)} \sum_{\substack{a\mid m^\infty \\ a> z/m^k}}  \frac{1}{a}.  \]
By Lemma \ref{lem-rankin}, the inner series on $a$ is bounded by $m^{1-\rho}(\frac{m^k}{z})^\rho$ with $0<\rho<1$, yielding
\[ 
\sum_{n>z} \frac{\av{g_{H}(n)}}{\varphi(n)} \ll_{\rho,\varepsilon} \frac{1}{z^\rho} \sum_{m\geq1} \frac{m^{1+(k-1)\rho}}{m^{k-\varepsilon}} 
= \frac{1}{z^\rho} \sum_{m\geq1}\frac{1}{m^{(k-1)(1-\rho)-\varepsilon}}.
\]
If the choice of $\rho$ satisfies $\rho<(1-\varepsilon)/2$, then the series in the last step converges. Hence we may take $\rho=1/3$ (with $\varepsilon>0$ sufficiently small) and bound the initial series by $1/z^{1/3}$. Finally, we may apply Theorem \ref{thm_uncond} with $\kappa=1/3$.
\end{proof}

\begin{rem}\label{rem-Z}
Suppose that $H$ is cut by valuations, with $1\in H$, and $V_\ell=\Z_{\geq0}$ for some $\ell$ (possibly for infinitely many $\ell$), i.e.\ there is no restriction on the $\ell$-adic valuation of the elements of $H$ for the considered $\ell$.  In this case, we have $g_H(\ell^m)=0$ for all $m\geq1$ by \eqref{eq-gab}. 
Hence, writing $P$ for the set of primes $\ell$ such that $V_\ell\neq \Z_{\geq0}$, and $P_\infty$ for the set of integers whose prime factors are in $P$, the densities of Theorems \ref{vlfin} and \ref{vlinf} reduce to
\[ \dens(\mc P_{H,C})=\sum_{n\in P_\infty} \frac{g_H(n)c(n)}{[F_{n,n}:K]}. \]
\end{rem}

\subsection{Prescribing $\ell$-adic valuations for finitely many primes}

In this section we consider a special case of sets almost cut by valuations, namely those with $\ell$-adic conditions only for finitely many primes $\ell$.

\begin{thm}\label{thm-HQ}
Let $H\subseteq \mathbb N$ be such that $H=H_Q$ for some $Q\geq1$. Then for all $0<\rho<1$ we have
\[ \pi_{\mc P_{H_Q,C}}(x) = \Li(x)\sum_{ n\mid Q^\infty} \frac{g_{H_Q}(n)c(n)}{[F_{n,n}:K]}+O_{F,K,G,Q,\rho}\left( x\left( \frac{(\log\log x)^2}{\log x} \right)^{1+\frac{3\rho}{2(r+2)}} \right), \]
where $g_{H_Q}$ is as in \eqref{eq-g}.
\end{thm}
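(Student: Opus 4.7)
The key observation is that the function $g_{H_Q}$ vanishes off of the divisors of $Q^\infty$. Indeed, decompose any $n\in\NN$ uniquely as $n=n_1n_2$ with $n_1\mid Q^\infty$ and $(n_2,Q)=1$. Since $\chi_{H_Q}(d)$ depends only on $(v_\ell(d))_{\ell\mid Q}$, for any $d_1\mid n_1$ and $d_2\mid n_2$ we have $\chi_{H_Q}(d_1d_2)=\chi_{H_Q}(d_1)$. Using $\mu(n_1n_2/(d_1d_2))=\mu(n_1/d_1)\mu(n_2/d_2)$, formula \eqref{eq-g} factorizes as
\[
g_{H_Q}(n) = \Bigl(\sum_{d_1\mid n_1}\mu(n_1/d_1)\chi_{H_Q}(d_1)\Bigr)\Bigl(\sum_{d_2\mid n_2}\mu(n_2/d_2)\Bigr) = g_{H_Q}(n_1)\cdot[n_2=1],
\]
so $g_{H_Q}(n)=0$ whenever $n\nmid Q^\infty$. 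In particular the main term $\sum_{n\geq1} g_{H_Q}(n)c(n)/[F_{n,n}:K]$ appearing in Theorem \ref{thm_uncond} coincides with the sum restricted to $n\mid Q^\infty$ that appears in the statement.

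It remains to verify the hypothesis \eqref{eq-conv-new} of Theorem \ref{thm_uncond} for $g_{H_Q}$. For any $n\mid Q^\infty$ we have the trivial bound $\av{g_{H_Q}(n)}\leq\tau(n)$, and moreover $n/\varphi(n)\leq\prod_{\ell\mid Q}\ell/(\ell-1)\ll_Q 1$. Fix $0<\rho<1$. Rankin's trick (multiplying each term by $(n/z)^\rho\geq1$) together with the factorization over $\ell\mid Q$ yields
\[
\sum_{n>z}\frac{\av{g_{H_Q}(n)}}{\varphi(n)} \ll_Q \sum_{\substack{n\mid Q^\infty\\ n>z}}\frac{\tau(n)}{n} \leq \frac{1}{z^\rho}\sum_{n\mid Q^\infty}\frac{\tau(n)}{n^{1-\rho}} = \frac{1}{z^\rho}\prod_{\ell\mid Q}\Bigl(1-\ell^{-(1-\rho)}\Bigr)^{-2} \ll_{Q,\rho}\frac{1}{z^\rho},
\]
where the Euler product converges because $1-\rho>0$ and the product is over the finitely many primes dividing $Q$.

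With this tail bound in hand, Theorem \ref{thm_uncond} applies with $\kappa=\rho$ (which satisfies $0<\kappa<1\leq2$), and, in view of \eqref{eq-piS} and the support property of the previous paragraph, delivers the stated asymptotic formula and error term. There is no serious obstacle: the whole argument hinges on the structural observation that restricting the defining conditions on $H$ to the primes dividing a fixed integer $Q$ forces $g_{H_Q}$ to be supported on $\{n:n\mid Q^\infty\}$; once this is noted, the convergence hypothesis of Theorem \ref{thm_uncond} reduces to a routine Rankin-type estimate on a sum indexed by the finitely generated monoid of $Q$-smooth integers.
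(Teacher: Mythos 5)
Your proof is correct and follows essentially the same route as the paper: show that $g_{H_Q}$ is supported on $n\mid Q^\infty$, bound its size there, obtain the tail estimate $\sum_{n>z}\av{g_{H_Q}(n)}/\varphi(n)\ll_{Q,\rho} z^{-\rho}$ by a Rankin-type argument over the $Q$-smooth integers, and apply Theorem \ref{thm_uncond} with $\kappa=\rho$. The only cosmetic differences are that the paper bounds $\av{g_{H_Q}(n)}\leq 2^{\omega(Q)}$ and cites Lemma \ref{lem-rankin}, whereas you use the bound $\av{g_{H_Q}(n)}\leq\tau(n)$ and perform the Rankin trick directly via the Euler product $\prod_{\ell\mid Q}\bigl(1-\ell^{-(1-\rho)}\bigr)^{-2}$; both are valid.
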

The restriction $n\mid Q^\infty$ can be dropped because $g_{H_Q}(n)=0$ if there is a prime $\ell\mid n$ such that $\ell\nmid Q$, as we will see in the proof.

\begin{proof}
The set $H$ is almost cut by valuations (indeed, $H_{\ell}=\NN$ for every $\ell\nmid Q$) and hence $g_H(n)$ decomposes as in \eqref{eq-gH}. By Lemma \ref{lem_g} for all $\ell\nmid Q$   we have $g_{H_\ell}(1)=1$ and $g_{H_\ell}(\ell^ m)=0$ for $m\geq1$, whence $g_H(n)\neq0$ only if $n\mid Q^ \infty$. 
Since by definition, see \eqref{eq-g}, $g_H(n)=\sum_{t\mid n, t\in H} \mu(n/t)$ and there are at most $2^{\omega(n)}$ nonzero terms in this sum, for $n\mid Q^\infty$ we may bound $\av{g_H(n)}\leq 2^{\omega(Q)}$. 
Then we have
\[
\sum_{n>z}\frac{\av{g_H(n)}}{\varphi(n)}  
\leq 2^{\omega(Q)}\cdot \sum_{\substack{n\mid Q^\infty \\ n>z}} \frac{1}{\varphi(n)} \ll \frac{2^{\omega(Q)}Q}{\varphi(Q)} \sum_{\substack{n\mid Q^\infty \\ n>z}} \frac{1}{n}\ll_{Q,\rho} \frac{1}{z^\rho},
\]
where we used the bound $\frac{n}{\varphi(n)}\leq \frac{Q}{\varphi(Q)}$ for $n\mid Q^\infty$, and we applied Lemma \ref{lem-rankin} with $0<\rho<1$. We may conclude by applying Theorem \ref{thm_uncond} with $\kappa=\rho$.
\end{proof}

\begin{exa}\label{exa-l}
Let $\ell$ be a fixed prime number.
Let $H\subseteq\NN$ and consider $H_\ell$, the preimage under $v_\ell$ of $v_\ell(H)$. We may apply Theorem \ref{thm-HQ} to $H_\ell$.
By the proof of Theorem \ref{thm-HQ},  $g_{H_\ell}$ is nonzero only on powers of $\ell$ and it is described by \eqref{eq_gmult2}.
We distinguish two cases as in \eqref{eq-vl}. In the first case of \eqref{eq-vl}  we obtain 
\[ \dens(\mc P_{H_\ell,C}) = \sum_{i\geq1} \bigg( \frac{c(\ell^{a_i})}{[F_{\ell^ {a_i},\ell^ {a_i} }:K]} - \frac{c(\ell^{b_i+1})}{[F_{\ell^ {b_i+1},\ell^ {b_i+1} }:K]} \bigg). \]
This sum is finite if $V_\ell$ is the union of finitely many bounded intervals. In the second case of \eqref{eq-vl} we have
\[ \dens(\mc P_{H_\ell,C}) = \sum_{i=1}^{N_\ell} \bigg( \frac{c(\ell^{a_i})}{[F_{\ell^ {a_i},\ell^ {a_i} }:K]} - \frac{c(\ell^{b_i+1})}{[F_{\ell^ {b_i+1},\ell^ {b_i+1} }:K]} \bigg) + \frac{c(\ell^{a_{N_\ell+1}})}{[F_{\ell^ {a_{N_\ell+1}},\ell^ {a_{N_\ell+1}} }:K]}. \]
Since for a given $v\in V_\ell$ we have
\[ \dens\big(\mc P_{v_\ell^{-1}(v),C}\big) = \frac{c(\ell^v)}{[F_{\ell^ {v},\ell^ {v} }:K]} - \frac{c(\ell^{v+1})}{[F_{\ell^ {v+1},\ell^ {v+1} }:K]} \]
by Chebotarev's density theorem,
we recovered the result of Proposition \ref{prop-l} which provides the identity
\[ \dens(\mc P_{H_\ell,C})=\sum_{v\in V_\ell}\dens\big(\mc P_{v_\ell^{-1}(v),C}\big). \]
\end{exa}

\begin{rem}\label{rem-const}
By Proposition \ref{kummer} there is an integer $B$, depending only on $F,K,G$, such that for all $\ell\nmid B$ we have $[K_{\ell^ {n},\ell^ {n} }:K]=\varphi(\ell^n)\ell^{rn}$ and $K_{\ell,\ell}\cap F=K$. The latter implies that the two conditions on the $K$-automorphisms counted by $c(\ell^n)$, recall its definition, are mutually independent, whence $c(\ell^n)=\sharp C$ for all $n$.
Thus, for $\ell\nmid B$, in the context of Example \ref{exa-l} we derive the identities
\[
\dens\big(\mc P_{v_\ell^{-1}(v),C}\big) = \frac{\sharp C}{[F:K]}E_{v,r}(\ell) \quad \text{ and } \quad
\dens(\mc P_{H_\ell,C}) = \frac{\sharp C}{[F:K]}A_{V_\ell,r}(\ell), 
\]
where, for $r\geq1$ and $v\geq0$, we set 
\[  E_{v,r}(\ell):=\begin{cases}
1-\frac{1}{\ell^r(\ell-1)}& \text{if $v=0$}\\
\frac{1}{\ell^{v(r+1)}}\cdot \frac{\ell}{(\ell-1)} (1-\frac{1}{\ell^{r+1}}) & \text{if $v>0$\,,} 
\end{cases}\]
and 
\[ A_{V_\ell,r}(\ell):= \sum_{v\in V_\ell} E_{v,r}(\ell).  \]
In particular, for trivial Frobenius condition we have
\begin{equation}\label{eq_vHell}
\dens\big(\mc P_{v_\ell^{-1}(v)}\big) = E_{v,r}(\ell) \quad \text{ and } \quad
\dens(\mc P_{H_\ell}) = A_{V_\ell,r}(\ell).
\end{equation}
The first identity in \eqref{eq_vHell} is shown also in \cite[Corollary 21]{JPS}. 
\end{rem}

\begin{exa}
Let $H=n\mathbb N$ for a given integer $n>1$. Then we have $H=H_n$ and $V_\ell=\{v: v\geq v_\ell(n)\}$ for all $\ell$. 
Since $H$ is cut by valuations, by \eqref{eq-gH} we have $g_H(m)=\prod_\ell g_{H_\ell}(\ell^{v_\ell(m)})$, and hence by Lemma \ref{lem_g} we obtain  $g_{H}(n)=1$, and $g_{H}(m)=0$ if $m\neq n$. Thus, by Theorem \ref{thm-HQ} we recover an asymptotic formula for Chebotarev's density theorem applied to the Galois extension $F_{n,n}/K$.
\end{exa}

\begin{cor}
Let $m>1$ be a squarefree integer, $t\mid m^\infty$ and set $S_{m,t}:=\{n\geq1: (n,m^\infty)=t\}$. We have
\[ \pi_{\mc P_{S_{m,t},C}}(x) \sim \Li(x)\sum_{t\mid n\mid mt} \frac{\mu(n/t)c(n)}{[F_{n,n}:K]}. \]
In particular, for the set $S_{m,1}$ of integers which are coprime to $m$, we have
\[ \pi_{\mc P_{S_{m,1},C}}(x) \sim \Li(x)\sum_{n\mid m} \frac{\mu(n)c(n)}{[F_{n,n}:K]}. \]
In both cases, the error term can be taken as in Theorem \ref{thm-HQ} with $Q=m$.
\end{cor}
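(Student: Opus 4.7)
The strategy is to reduce this corollary to a direct application of Theorem~\ref{thm-HQ}. First I would verify that $S_{m,t}$ fits the hypothesis $H = H_Q$: the defining condition $(n,m^\infty) = t$ is equivalent to $v_\ell(n) = v_\ell(t)$ for every prime $\ell \mid m$, with no restriction on $v_\ell(n)$ for $\ell \nmid m$. Hence membership of $n$ in $S_{m,t}$ depends only on the tuple $(v_\ell(n))_{\ell \mid m}$, so $S_{m,t} = (S_{m,t})_m$. Theorem~\ref{thm-HQ} then applies with $Q = m$ and gives, for any fixed $0 < \rho < 1$,
\[ \pi_{\mc P_{S_{m,t},C}}(x) = \Li(x)\sum_{n \mid m^\infty} \frac{g_{S_{m,t}}(n)\, c(n)}{[F_{n,n}:K]} + O\!\left(x\Big(\tfrac{(\log\log x)^{2}}{\log x}\Big)^{1+\frac{3\rho}{2(r+2)}}\right), \]
with implied constant depending on $F, K, G, m, \rho$.

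Next I would compute $g_{S_{m,t}}(n)$ explicitly for $n \mid m^\infty$ directly from the definition \eqref{eq-g}. If $n \mid m^\infty$, then every divisor $d$ of $n$ is supported on primes dividing $m$, so $(d, m^\infty) = d$, and the condition $d \in S_{m,t}$ collapses to the single requirement $d = t$. Consequently $g_{S_{m,t}}(n) = \mu(n/t)$ if $t \mid n$, and $g_{S_{m,t}}(n) = 0$ otherwise.

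Finally I would shrink the summation range: $\mu(n/t)$ vanishes unless $n/t$ is squarefree, and a squarefree divisor of $m^\infty$ divides $\rad(m^\infty) = m$ (using that $m$ is itself squarefree). Thus the nonzero contributions come precisely from $n$ with $t \mid n \mid mt$, yielding the formula in the statement. The specialization to $t = 1$ is immediate since $S_{m,1}$ is the set of integers coprime to $m$. The only conceptual point is the first one (recognising the form $H = H_Q$); the rest is bookkeeping, so I do not anticipate a substantive obstacle.
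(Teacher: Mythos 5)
Your proposal is correct and follows essentially the same route as the paper: recognise that $S_{m,t}$ is the preimage under $v_m$ of its image (so Theorem \ref{thm-HQ} applies with $Q=m$), observe that the only element of $S_{m,t}$ dividing $m^\infty$ is $t$ so that $g_{S_{m,t}}(n)=\mu(n/t)$ for $t\mid n\mid m^\infty$ and $0$ otherwise, and then use squarefreeness of $m$ to restrict the sum to $t\mid n\mid mt$. The explicit verification that $(n,m^\infty)=t$ depends only on $(v_\ell(n))_{\ell\mid m}$ is a slightly fuller justification than the paper gives, but the argument is the same.
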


\begin{proof}
The set $S=S_{m,t}$ is the preimage of $v_m(S)$ under $v_m$. By Theorem \ref{thm-HQ}, we have $g_S(n)\neq0$ only if $n\mid m^\infty$. Since the only element of $S$ dividing $m^\infty$ is $t$, by \eqref{eq-g} we have 
\[ g_S(n) = \begin{cases}
    \mu(n/t) & \text{if } t\mid n\mid mt \\
    0 & \text{otherwise}.
\end{cases} \]
We conclude by applying Theorem \ref{thm-HQ}.
The case $t=1$ follows directly.
\end{proof}

\subsection{The index in sets which are almost cut by valuations}

\begin{thm}\label{prop-almostcut}
Suppose that $H$ is almost cut by valuations,  let $Q\geq1$ be such that $H=H_{Q}\cap H_{Q}'$ where $H'_Q:=\bigcap_{\ell\nmid Q}H_\ell$.
If the function $g_{H_Q'}$ satisfies the estimate
\begin{equation}\label{eq-conv-new2}
    \sum_{\substack{n>z \\ (n,Q)=1}} \frac{|g_{H_Q'}(n)|}{\varphi(n)} \ll \frac{1}{z^\kappa}
\end{equation}
for some $0<\kappa<1$, then we have
\[
    \pi_{\mc P_{H,C}}(x) = \delta_{H,C}\cdot\Li(x)+O\left( x\left( \frac{(\log\log x)^2}{\log x} \right)^{1+\frac{3\kappa}{2(r+2)}} \right),
\]
where 
\begin{equation}\label{eq-almostcut}
    \delta_{H,C}:=\sum_{m\mid Q^\infty}g_{H_Q}(m)\sum_{\substack{n\geq1\\ (n,Q)=1}} \frac{g_{H_Q'}(n)c(mn)}{[F_{mn,mn}:K]}.
\end{equation}
We have $g_{H_Q'}(n)=\prod_{\ell\nmid Q} g_{H_\ell}(\ell^{v_\ell(n)})$ for $n$ such that $(n,Q)=1$, and for $\ell\nmid Q$ and $v\geq1$, $g_{H_\ell}(\ell^v)$ is given by \eqref{eq_gmult2}.
If $0\in V_\ell$ for all $\ell\nmid Q$, then $g_{H_Q}(m)=g_H(m)$ for $m\mid Q^\infty$. 
If $1\in H$, then $g_{H_Q'}(n)=g_H(n)$ for $n$ with $(n,Q)=1$.

The constant implied by the error term depends only on $F,K,G,Q,\kappa$ and possibly any other parameters on which the constant implied by \eqref{eq-conv-new2} might depend.
\end{thm}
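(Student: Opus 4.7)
The plan is to reduce the theorem to a direct application of Theorem \ref{thm_uncond} applied to $g_H$ itself. The decomposition from Lemma \ref{lem_mult} yields $g_H(n)=g_{H_Q}(a)\,g_{H_Q'}(b)$ whenever $n=ab$ with $a\mid Q^\infty$ and $(b,Q)=1$, and since $g_{H_Q}$ vanishes outside divisors of $Q^\infty$ (as in the proof of Theorem \ref{thm-HQ}), this gives an essentially complete ``factorisation'' of $g_H$. The main technical content will be to upgrade the coprime-to-$Q$ convergence hypothesis \eqref{eq-conv-new2} to the full convergence hypothesis \eqref{eq-conv-new} for $g_H$, with the same exponent $\kappa$; once this is done Theorem \ref{thm_uncond} gives the asymptotic, and reindexing the resulting series as $n=mb$ with $m\mid Q^\infty$ and $(b,Q)=1$ produces the constant $\delta_{H,C}$ of \eqref{eq-almostcut}.

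To carry out the upgrade, I would split the tail as
\[
\sum_{n>z}\frac{\av{g_H(n)}}{\varphi(n)}
=\sum_{a\mid Q^\infty}\frac{\av{g_{H_Q}(a)}}{\varphi(a)}
\sum_{\substack{(b,Q)=1\\ b>z/a}}\frac{\av{g_{H_Q'}(b)}}{\varphi(b)},
\]
using that $\varphi(ab)=\varphi(a)\varphi(b)$ since $(a,b)=1$. For $a\leq z$, the inner sum is $\ll (a/z)^\kappa$ by \eqref{eq-conv-new2}; for $a>z$ the inner sum is bounded by the absolute convergence constant of $\sum_{(b,Q)=1}\av{g_{H_Q'}(b)}/\varphi(b)$ (itself implied by \eqref{eq-conv-new2}). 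Using $\av{g_{H_Q}(a)}\leq 2^{\omega(Q)}$ and $a/\varphi(a)\leq Q/\varphi(Q)$ for $a\mid Q^\infty$, the $a\leq z$ piece is bounded by $z^{-\kappa}\sum_{a\mid Q^\infty}a^{\kappa-1}$, and the Euler product $\prod_{p\mid Q}(1-p^{\kappa-1})^{-1}$ converges since $\kappa<1$. For the $a>z$ piece, Lemma \ref{lem-rankin} with exponent $\kappa$ gives $\sum_{a\mid Q^\infty,\,a>z}1/a\ll_{Q,\kappa}z^{-\kappa}$. Combining both yields the desired bound $\sum_{n>z}\av{g_H(n)}/\varphi(n)\ll_{Q,\kappa}z^{-\kappa}$.

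With \eqref{eq-conv-new} verified for $g_H$, Theorem \ref{thm_uncond} directly gives
\[
\pi_{\mc P_{H,C}}(x)=\sum_{n\geq 1}g_H(n)\pi_{n,C}(x)
=\Li(x)\sum_{n\geq 1}\frac{g_H(n)c(n)}{[F_{n,n}:K]}+O\!\left(x\!\left(\tfrac{(\log\log x)^{2}}{\log x}\right)^{1+\frac{3\kappa}{2(r+2)}}\right).
\]
Writing $n=mb$ with $m\mid Q^\infty$, $(b,Q)=1$ and substituting $g_H(n)=g_{H_Q}(m)g_{H_Q'}(b)$ yields exactly the expression \eqref{eq-almostcut} for $\delta_{H,C}$. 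The two identifications at the end of the statement ($g_{H_Q}=g_H$ on divisors of $Q^\infty$ when every $V_\ell$ with $\ell\nmid Q$ contains $0$, and $g_{H_Q'}=g_H$ on integers coprime to $Q$ when $1\in H$) are read off immediately from the last two bullet points of Lemma \ref{lem_mult}.

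The main obstacle is the verification of the uniform tail bound for $g_H$ from the hypothesis on $g_{H_Q'}$. The subtlety is that one cannot simply pull the $a$-factor out of the tail cutoff $n>z$; it requires the two-range split on $a$ together with the key observation that $\sum_{a\mid Q^\infty}a^{\kappa-1}$ converges precisely because $\kappa<1$ (which is why this hypothesis appears in the statement rather than $\kappa\leq 2$ as in Theorem \ref{thm_uncond}). Everything else — multiplicativity of $g_H$, the absolute convergence of $\sum \av{g_H(n)}/\varphi(n)$, and the reindexing of the limit constant — is formal once this estimate is in hand.
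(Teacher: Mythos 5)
Your proposal is correct and follows essentially the same route as the paper: factor $g_H(n)=g_{H_Q}(a)g_{H_Q'}(d)$ via Lemma \ref{lem_mult}, bound the inner coprime-to-$Q$ tail by $(a/z)^\kappa$ using \eqref{eq-conv-new2}, exploit $\kappa<1$ so that $\sum_{a\mid Q^\infty}a^{\kappa-1}$ converges (together with $\av{g_{H_Q}(a)}\leq 2^{\omega(Q)}$ and $a/\varphi(a)\leq Q/\varphi(Q)$), and then conclude by Theorem \ref{thm_uncond} and reindexing the limit series. Your separate treatment of the range $a>z$ via Lemma \ref{lem-rankin} is only a cosmetic variation: the paper absorbs that range by noting the bound $(a/z)^\kappa\gg1$ there, so the same uniform estimate applies.
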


\begin{proof}
Notice that the set $H_Q'$ is cut by valuations and by \eqref{eq-gH} with $Q_0=1$ and Lemma \ref{lem_g} we have $g_{H_Q'}(n)=\prod_{\ell\nmid Q} g_{H_\ell}(\ell^{v_\ell(n)})$ for $n$ such that $(n,Q)=1$.
Writing $n=ad$ with $a\mid Q^\infty$ and $(d,Q)=1$, by \eqref{eq-gH} we have
$ g_H(n)= g_{H_{Q}}(a) g_{H_{Q}'}(d)$.
Thus
\begin{equation}\label{eq-thmalm}
    \sum_{n>z} \frac{\av{g_H(n)}}{\varphi(n)} 
= \sum_{a\mid Q^\infty} \frac{|g_{H_Q}(a)|}{\varphi(a)} \sum_{\substack{d>z/a \\ (d,Q)=1}} \frac{|g_{H_Q'}(d)|}{\varphi(d)}.
\end{equation}
By the assumption, the inner series can be bounded by $(a/z)^\kappa$. Notice that for $a\mid Q^\infty$ we have $|g_{H_Q}(a)|\leq 2^{\omega(Q)}$ by \eqref{eq-g}.
Therefore,
\[ \sum_{n>z} \frac{\av{g_H(n)}}{\varphi(n)} 
\ll_{Q} \frac{1}{z^{\kappa}} \sum_{a\mid Q^\infty} \frac{a^{\kappa}}{\varphi(a)} 
\ll \frac{1}{z^{\kappa}}\frac{Q}{\varphi(Q)} \sum_{a\mid Q^\infty} \frac{a^{\kappa}}{a},
\]
where we used the bound $\frac{a}{\varphi(a)}\leq \frac{Q}{\varphi(Q)}$ for $a\mid Q^\infty$. The inner series can be expressed as the (finite) product
\[ \prod_{\ell\mid Q}\frac{\ell^{1-\kappa}}{\ell^{1-\kappa}-1}. \]
Hence, the series \eqref{eq-thmalm} is $O_{Q}(\frac{1}{z^{\kappa}})$, and we may conclude by Theorem \ref{thm_uncond}. 
\end{proof}

\begin{cor}\label{cor-almostcut}
Writing $V_\ell$ as in \eqref{eq-vl} and assuming $a_1=0$ for each $\ell\nmid Q$, in Theorem \ref{prop-almostcut} we may take:
\begin{enumerate}[(i)]
    \item $\kappa=1/3$, if $\min_{\ell\nmid Q}b_1\geq2$;
    \item $\kappa=\frac{2}{3}(1-\frac{1+\alpha}{k})$, if we have:  $V_\ell=\bigcup_{i=1}^{N_\ell}[a_i,b_i]$ in the first case of \eqref{eq-vl} for $\ell\nmid Q$; for all $n$ with $(n,Q)=1$ we have $\prod_{\ell\mid n}2N_\ell\ll n^\alpha$ for some $\alpha>0$; $k:=1+\min_{\ell\nmid Q}b_1>1+\alpha$.
\end{enumerate}
\end{cor}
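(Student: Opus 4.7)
The plan is to verify the convergence condition \eqref{eq-conv-new2} with the stated value of $\kappa$ in each of the two cases, after which the conclusion follows immediately from Theorem \ref{prop-almostcut}. In both cases the key structural observation is that $H_Q' = \bigcap_{\ell \nmid Q} H_\ell$ is itself cut by valuations (over the primes not dividing $Q$), and the standing assumption $a_1 = 0$ for each $\ell \nmid Q$ ensures $0 \in V_\ell$ and hence $1 \in H_Q'$. By Lemmas \ref{lem_mult} and \ref{lem_g}, the combinatorial description of the support of $g_{H_Q'}$ on integers coprime to $Q$ is then identical to the one exploited in the proofs of Theorems \ref{vlinf} and \ref{vlfin}, and in particular $|g_{H_Q'}(n)|\leq 1$ for every such $n$.

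For case (i), set $k := 1 + \min_{\ell \nmid Q} b_1 \geq 3$. By Lemma \ref{lem_g} the function $g_{H_\ell}$ is supported (for $m\geq 1$) on $m \in \{b_1+1, a_2, b_2+1, \dots\}$, each of which is $\geq k$; multiplicativity of $g_{H_Q'}$ therefore forces $\rad(n)^k \mid n$ whenever $(n,Q)=1$ and $g_{H_Q'}(n)\neq 0$. Writing $n = a m^k$ with $m = \rad(n)$ squarefree and coprime to $Q$, and $a \mid m^\infty$, I reproduce the chain of estimates in the proof of Theorem \ref{vlinf} (using $\varphi(am^k) = \varphi(m^k)\,a$, the inner sum on $a$ controlled by Lemma \ref{lem-rankin} at $\rho = 1/3$, and the outer sum on $m$ converging because $k\geq 3$) to obtain
\[
\sum_{\substack{n > z \\ (n,Q) = 1}} \frac{|g_{H_Q'}(n)|}{\varphi(n)} \ll \frac{1}{z^{1/3}},
\]
which is \eqref{eq-conv-new2} with $\kappa = 1/3$.

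For case (ii), the extra assumption $V_\ell = \bigcup_{i=1}^{N_\ell}[a_i,b_i]$ for $\ell \nmid Q$ together with $\prod_{\ell \mid n} 2 N_\ell \ll n^\alpha$ replaces the infinite-interval analysis by the finite one of Theorem \ref{vlfin}: for each squarefree $m$ coprime to $Q$ there are at most $e(m) := \prod_{\ell \mid m} 2 N_\ell \ll m^\alpha$ integers $n$ with $\rad(n) = m$ and $g_{H_Q'}(n) \neq 0$, each satisfying $n \geq m^k$. Mirroring the proof of Theorem \ref{vlfin} verbatim with $(n,Q)=1$ throughout reduces the series to
\[
\sum_{\substack{n > z \\ (n,Q)=1}}\frac{|g_{H_Q'}(n)|}{\varphi(n)}\ll \sum_{\substack{m > z^{1/k} \\ (m,Q)=1}}\frac{\mu(m)^2 e(m)}{\varphi(m^k)}\ll \sum_{m > z^{1/k}}\frac{\log\log m}{m^{k-\alpha}},
\]
which converges since $k > 1 + \alpha$. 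Choosing $\varepsilon = \tfrac{1}{3}(1 - (1+\alpha)/k)$ as in Theorem \ref{vlfin} yields the bound $\ll z^{-\frac{2}{3}(1-(1+\alpha)/k)}$, giving \eqref{eq-conv-new2} with $\kappa = \tfrac{2}{3}(1-(1+\alpha)/k)$.

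The main (and essentially only) obstacle is bookkeeping: checking that the proofs of Theorems \ref{vlinf} and \ref{vlfin} rely solely on the local structure of $V_\ell$ at the primes $\ell$ contributing to the support of the Möbius-inverse function, so that restricting summation to $(n,Q) = 1$ and replacing $H$ by $H_Q'$ leaves the estimates intact. No additional analytic input beyond Lemmas \ref{lem-phi} and \ref{lem-rankin} is required.
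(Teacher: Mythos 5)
Your proposal is correct and follows essentially the same route as the paper: the paper's proof likewise notes that $H_Q'$ is cut by valuations with $1\in H_Q'$ (since $a_1=0$ for all $\ell\nmid Q$) and then bounds the relevant series exactly as in the proofs of Theorems \ref{vlinf} and \ref{vlfin}, yielding \eqref{eq-conv-new2} with $\kappa=1/3$ and $\kappa=\frac{2}{3}(1-\frac{1+\alpha}{k})$ respectively. Your write-up simply carries out in detail the bookkeeping that the paper leaves implicit.
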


\begin{proof}
    With the setup of Theorem \ref{prop-almostcut}, suppose that we are in one of the two cases (i) or (ii). Since $H_Q'$ is cut by valuations with $1\in H_Q'$ (we have $a_1=0$ for all $\ell$),  letting $\kappa$ be as in the respective case, we may bound the inner series of \eqref{eq-thmalm} by $(a/z)^\kappa$ as in the proofs of Theorems \ref{vlinf} and \ref{vlfin}, respectively.
\end{proof}

\begin{rem}\label{rem-Z2}
    In the context of Theorem \ref{prop-almostcut}, if for some $\ell\nmid Q$ we have $V_\ell=\Z_{\geq0}$, then reasoning as in Remark \ref{rem-Z} we may restrict the indices $n$ in \eqref{eq-almostcut} to $n\in P_\infty$, where $P$ is the set of primes not dividing $Q$ and such that $V_\ell\neq\Z_{\geq0}$. In particular, requiring $V_\ell=\Z_{\geq0}$ for all $\ell\nmid Q$ reduces the problem to Theorem \ref{thm-HQ}, which can then be obtained as a corollary of Theorem \ref{prop-almostcut} (except for the exponent in the error term). Indeed, in this case the convergence condition \eqref{eq-conv-new2} is trivial as $g_{H_Q'}(n)=0$ for all $n>1$.
\end{rem}

\begin{rem}\label{rem-relax}
Let $H$ be cut by valuations, and let us relax the assumptions of Theorems \ref{vlfin} and \ref{vlinf}  only for finitely many primes (whose product we denote by $Q$) as follows:
    \begin{enumerate}[(i)]
        \item possibly $1\notin H$, but $0\in V_\ell$ for all primes $\ell\nmid Q$;
        \item in Theorem \ref{vlfin},  we assume that $V_\ell$ is written as in \eqref{eq-vl} for all $\ell\nmid Q$ with $V_\ell=\bigcup_{i=1}^{N_\ell}[a_i,b_i]$ in the first case, $\prod_{\ell\mid n}2N_\ell\ll n^\alpha$ holds for all $n$ coprime to $Q$, and $k:=1+\min_{\ell\nmid Q}b_1>1+\alpha$;
        \item  in Theorem \ref{vlinf}, we assume that $1,2\in V_\ell$ for all $\ell\nmid Q$.
    \end{enumerate}
Then we may consider the set $H=H_{Q}\cap\bigcap_{\ell\nmid Q}H_\ell$ as almost cut by valuations and apply Theorem \ref{prop-almostcut} for (i) and Corollary \ref{cor-almostcut} for (ii) and (iii) to obtain an asymptotic formula for $\pi_{\mc P_{H,C}}(x)$.
\end{rem}

\section{The density as a limit and as an Euler product}
\label{sec-prod}

The following result is already known in a more general form under the assumption of GRH, see \cite[Theorem 11]{JPS}.
\begin{prop}\label{prop-lim}
Suppose that $H$ is almost cut by valuations, and such that $g_H$ satisfies the convergence condition of Theorem \ref{thm_uncond}.
We have
\[ \dens(\mc P_{H,C}) = \lim_{Q\to\infty}\dens(\mc P_{H_Q,C}). \]
\end{prop}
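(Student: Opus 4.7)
The plan is to reduce the identity to a routine absolute-convergence argument, once the two densities are written as series. By the standing hypothesis on $g_H$ and Remark \ref{rem-dens}, we have
\[ \dens(\mc P_{H,C}) = \sum_{n\geq 1} \frac{g_H(n)c(n)}{[F_{n,n}:K]}, \]
and the sum converges absolutely because $c(n)/[F_{n,n}:K]\leq 1/[K_{n,n}:K]$ by \eqref{eq-c} and $\sum_n |g_H(n)|/[K_{n,n}:K]<\infty$ by \eqref{series-conv}. On the right-hand side, for every $Q$ the set $H_Q$ satisfies $(H_Q)_Q=H_Q$, so Theorem \ref{thm-HQ} gives
\[ \dens(\mc P_{H_Q,C}) = \sum_{m\mid Q^\infty} \frac{g_{H_Q}(m)c(m)}{[F_{m,m}:K]}. \]

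The key step is to identify the summands for $Q$ large. Since $H$ is almost cut by valuations, write $H=H_{Q_0}\cap\bigcap_{\ell\nmid Q_0}H_\ell$, and note that $0\in V_\ell$ for all but finitely many $\ell$ (any single element of $H$ has only finitely many prime divisors). Let $Q_1$ be the product of $Q_0$ and those exceptional primes where $0\notin V_\ell$. For any $Q$ divisible by $Q_1$ and for $m\mid Q^\infty$, the identity $v_\ell(m)=0\in V_\ell$ holds for every $\ell\nmid Q$, so $m\in H$ if and only if $m\in H_Q$. Since $g_H(m)$ and $g_{H_Q}(m)$ are each obtained from $\chi_H$, respectively $\chi_{H_Q}$, evaluated on divisors of $m$ (which themselves divide $Q^\infty$), I conclude $g_H(m)=g_{H_Q}(m)$ for all $m\mid Q^\infty$ whenever $Q_1\mid Q$.

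Combining the two series, for $Q_1\mid Q$ one has
\[ \dens(\mc P_{H,C})-\dens(\mc P_{H_Q,C}) = \sum_{n\,\nmid\, Q^\infty} \frac{g_H(n)c(n)}{[F_{n,n}:K]}. \]
Given $\varepsilon>0$, absolute convergence provides $N$ with $\sum_{n>N}|g_H(n)|c(n)/[F_{n,n}:K]<\varepsilon$. Choosing $Q$ to be any multiple of $Q_1$ that is also divisible by every prime up to $N$ forces every $n\nmid Q^\infty$ to have a prime factor exceeding $N$, hence $n>N$, and the tail estimate concludes the proof.

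The argument is essentially bookkeeping; the only subtlety I anticipate is making sure that $g_{H_Q}$ truly agrees with $g_H$ on divisors of $Q^\infty$, which is why the reduction to the case $Q_1\mid Q$ (so that $0\in V_\ell$ for every $\ell\nmid Q$) is needed. Everything else follows from dominated/absolute convergence.
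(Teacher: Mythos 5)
Your proof is correct and takes essentially the same route as the paper: both express $\dens(\mc P_{H_Q,C})$ via Theorem \ref{thm-HQ} and $\dens(\mc P_{H,C})$ via Remark \ref{rem-dens}, and both rest on the fact that $g_{H_Q}$ agrees with $g_H$ once $Q$ is divisible by the relevant primes (the paper cites Lemma \ref{lem_mult}, you rederive it directly). The only difference is presentational: where the paper exchanges the two limits of a double sequence, you give an explicit tail estimate from absolute convergence, which works equally well along the paper's sequence $Q_x=\prod_{\ell\leq x}\ell$ since $H_Q$ depends only on $\rad(Q)$, so divisibility by the prime factors of your $Q_1$ suffices.
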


\begin{proof}
By Theorem \ref{thm-HQ} we have
\[ \dens(\mc P_{H_Q,C})=\sum_{n\geq1}\frac{g_{H_Q}(n)c(n)}{[F_{n,n}:K]}. \]
Let $Q_0$ be such that $H=H_{Q_0}\cap\bigcap_{\ell\nmid Q_0}H_\ell$. By Lemma \ref{lem_mult} if $Q$ is divisible by $nQ_0$ and by the finitely many primes $\ell$ such that $0\notin V_\ell$, then we have $g_{H_Q}(n)=g_{H}(n)$. Thus, for every $n$ we have $g_{H_Q}(n)\to g_{H}(n)$ when $Q\to\infty$, by which we mean the limit $x\to\infty$ for $Q_x:=\prod_{\ell\leq x}\ell$. 
 
Recall that $c(n)$ is independent of $H_Q$.
Then the double sequence
$\sum_{n=1}^N\frac{c(n)}{[F_{n,n}:K]}g_{H_Q}(n)$
has a limit when $Q\to\infty$ and when $N\to\infty$, separately. By exchanging the order of the limits, we have
\[ \lim_{Q\to\infty}\sum_{n\geq1}\frac{g_{H_Q}(n)c(n)}{[F_{n,n}:K]}
= \lim_{N\to\infty} \lim_{Q\to\infty} \sum_{n=1}^N\frac{g_{H_Q}(n)c(n)}{[F_{n,n}:K]} =
\sum_{n\geq1}\frac{g_{H}(n)c(n)}{[F_{n,n}:K]}.  \]
The last series equals $\dens(\mc P_{H,C})$ by Remark \ref{rem-dens}.
\end{proof}

We introduce the following notation for the next theorem.
For $Q\geq1$  we define
\begin{equation}\label{eq_def_CHQ}
C_{H,Q}:= \bigcup_{h\in H} C_{h_Q} = \bigsqcup_{h_Q} C_{h_Q} \subseteq \Gal(FK_{Q^\infty,Q^\infty}/K) ,
\end{equation}
where $h_Q:=\gcd(h,Q^\infty)$, $K_{Q^\infty,Q^\infty}$ is the union of all number fields $K_{Q^e,Q^e}$ for $e\geq1$, and $C_{h_Q}$ is the conjugacy-stable set of those $K$-automorphisms $\sigma$ which satisfy the following conditions:
\begin{itemize}
\item $\sigma|_F\in C$,
\item  $\sigma$ fixes $K_{h_Q,h_Q}$,
\item for all prime numbers $q\mid Q$, $\sigma$ does not fix $K_{qh_Q,qh_Q}$.
\end{itemize} 
Let $\mu_{\Haar}$ be the Haar measure on $\Gal(FK_{Q^\infty,Q^\infty}/K)$. By \eqref{eq_def_CHQ} we have 
\begin{equation}\label{eq-HaarQ}
    \dens(\mc P_{H_Q,C}) = \mu_{\Haar}(C_{H,Q}) = \sum_{h_Q}\mu_{\Haar}(C_{h_Q}).
\end{equation}

\begin{thm}\label{thm_prod}
Suppose that $H$ is almost cut by valuations and such that $g_H$ satisfies the convergence condition \eqref{eq-conv-new}. Then there is an integer $B\geq1$, depending only on $F$, $K$, $G$, and with $Q_0\mid B$, where $Q_0$ is associated to $H$ as in \eqref{eq-alm}, such that 
\begin{equation}\label{eq-eulprod}
    \dens(\mc P_{H,C}) = A_{H,r} \cdot \mu_{\Haar}(C_{H,B}) \cdot \prod_{\ell\mid B} A_{V_\ell,r}(\ell)^{-1},
\end{equation}
where $A_{V_\ell,r}(\ell)$ was defined in Remark \ref{rem-const}, and $A_{H,r}:=\prod_\ell A_{V_\ell,r}(\ell)$.
\end{thm}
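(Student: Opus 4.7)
The plan is to combine Proposition \ref{prop-lim} and the Haar measure identity \eqref{eq-HaarQ} to write
\[
\dens(\mc P_{H,C}) = \lim_{Q\to\infty}\mu_{\Haar}(C_{H,Q}),
\]
and then to perform an Euler product factorization of the right-hand side by separating the primes dividing $B$ from those that do not. I would choose $B$ to be a common multiple of $Q_0$ and of the universal integer from Proposition \ref{kummer}, enlarged further so that $0\in V_\ell$ for every prime $\ell\nmid B$. The last condition can be met because only finitely many primes $\ell$ can have $0\notin V_\ell$: each such prime forces every element of $H$ to be divisible by $\ell$, so infinitely many of them would force $H$ to be empty, which is incompatible with the convergence hypothesis on $g_H$ and the resulting positivity of the density.

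Next, for $Q = B\cdot M$ with $M$ a squarefree integer coprime to $B$, Proposition \ref{kummer}(b)--(c) gives a canonical isomorphism
\[
\Gal(FK_{Q^\infty,Q^\infty}/K)\cong \Gal(FK_{B^\infty,B^\infty}/K)\times\prod_{\ell\mid M}\Gal(K_{\ell^\infty,\ell^\infty}/K),
\]
under which the Haar measure factors as a product. I would then verify that $C_{H,Q}$ factors cleanly across this decomposition. Because $H$ is almost cut by valuations with $Q_0\mid B$ and $0\in V_\ell$ for $\ell\nmid B$, the admissible values of $h_Q$ decompose as $h_Q = h_B\cdot\prod_{\ell\mid M}\ell^{v_\ell}$ with $h_B$ ranging over the $B$-parts arising from $H_B$ and with $v_\ell$ ranging independently over $V_\ell$ for each $\ell\mid M$. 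Translating the three defining conditions in \eqref{eq_def_CHQ} component by component shows that $\sigma\in C_{H,Q}$ if and only if its $FK_{B^\infty,B^\infty}$-component lies in $C_{H,B}$ and, for each $\ell\mid M$, its $K_{\ell^\infty,\ell^\infty}$-component fixes $K_{\ell^{v},\ell^{v}}$ but not $K_{\ell^{v+1},\ell^{v+1}}$ for some $v\in V_\ell$. By Remark \ref{rem-const} the $\ell$-component contributes Haar measure $A_{V_\ell,r}(\ell)$, giving
\[
\mu_{\Haar}(C_{H,Q}) = \mu_{\Haar}(C_{H,B})\cdot\prod_{\ell\mid M}A_{V_\ell,r}(\ell).
\]

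Finally, letting $Q\to\infty$ through $Q=Q_x=\prod_{p\leq x}p$ so that $M$ exhausts all primes not dividing $B$, the finite products converge to $\prod_{\ell\nmid B}A_{V_\ell,r}(\ell) = A_{H,r}\cdot\prod_{\ell\mid B}A_{V_\ell,r}(\ell)^{-1}$, and the convergence of this infinite product is a direct consequence of \eqref{series-conv} once one expands the local factors in terms of $g_H(\ell^v)$. The main obstacle I foresee is the bookkeeping in the second step: one has to check that the disjoint union in \eqref{eq_def_CHQ} is compatible with the Galois product structure, combining the almost-cut-by-valuations hypothesis (to factor the indexing of $h_Q$ into independent local pieces) with Proposition \ref{kummer} (to guarantee the linear disjointness that makes the Haar measure multiplicative). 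Once this compatibility is in place, the $Q\to\infty$ limit formalizes the Euler product and yields the announced identity.
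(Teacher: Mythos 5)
Your proposal is correct and follows essentially the same route as the paper's proof: it combines Proposition \ref{prop-lim} with \eqref{eq-HaarQ}, factors $C_{H,Q}$ across the product decomposition of the Galois group supplied by Proposition \ref{kummer} (with $Q_0\mid B$), identifies each local factor with $A_{V_\ell,r}(\ell)$ via Remark \ref{rem-const} and \eqref{eq_vHell}, and passes to the limit $Q\to\infty$. The only detail worth adding is the strict positivity of $A_{V_\ell,r}(\ell)$ for $\ell\mid B$ (immediate since $V_\ell\neq\emptyset$, cf.\ \cite[Proposition 22]{JPS}), which is what justifies rewriting $\prod_{\ell\nmid B}A_{V_\ell,r}(\ell)$ as $A_{H,r}\cdot\prod_{\ell\mid B}A_{V_\ell,r}(\ell)^{-1}$ in the final step.
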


Under the assumption of GRH, the formula \eqref{eq-eulprod} holds for any set $H$ which is almost cut by valuations, see \cite[Theorem 24]{JPS}. Unconditionally, since if $g_H$ satisfies the condition of Theorem \ref{thm_uncond}, then $\dens(\mc P_{H,C})$ exists, it must be given by the formula \eqref{eq-eulprod}. Here we reprove this statement under the assumptions of Theorem \ref{thm_uncond}, also for the convenience of the reader. The main difference with the original proof is applying Proposition \ref{prop-lim} instead of \cite[Theorem 11]{JPS}.

\begin{proof}[Proof of Theorem \ref{thm_prod}]
By Proposition \ref{kummer}(b) there is an integer $B$, which depends only on $F,K,G$, such that  by varying $\ell\nmid B$, the conditions on the $K$-automorphisms $\sigma$ being the identity on $K_{\ell^v,\ell^v}$ and not on $K_{\ell^{v+1},\ell^{v+1}}$ are pairwise independent, and also independent from the condition $\sigma|_F\in C$. 
Up to replacing $B$ with a multiple, we may suppose that $Q_0\mid B$, so that for $h\in H$ the possible values $h_\ell$ for $\ell\nmid B$ are independent of the integers $h_B$. Hence, for all  $Q>1$ with $B\mid Q$, we have  
\[ C_{H,Q}=C_{H,B}\oplus\bigoplus_{\ell\nmid B,\ell\mid Q}C_{H,\ell}', \]
where $C_{H,\ell}':=\bigsqcup_{h_\ell}C_{h_\ell}'$ and $C_{h_\ell}'$ is defined as $C_{h_\ell}$ by dropping the condition $\sigma|_F\in C$.
By Proposition \ref{prop-lim} and \eqref{eq-HaarQ}, we obtain
\[
\dens(\mc P_{H,C}) = \mu_{\Haar}(C_{H,B}) \cdot \prod_{\ell\nmid B}\mu_{\Haar}(C'_{H,\ell}).
\]
For $\ell\nmid B$ and $v\in V_\ell$, by \eqref{eq_vHell} we have
\[ \mu_{\Haar}(C_{\ell^v}') = \dens(\mc P_{v_\ell^{-1}(v)}) = E_{v,r}(\ell), \]
whence, invoking \eqref{eq-HaarQ}, we have
\[ \mu_{\Haar}(C_{H,\ell}') = \sum_{v\in V_\ell} \mu_{\Haar}(C_{\ell^v}') =  A_{V_\ell, r}(\ell). \]
The constants $A_{V_\ell, r}(\ell)$ are strictly positive by \cite[Proposition 22]{JPS}, and we may conclude.
\end{proof}

An analogous result can be obtained in an alternative way.

\begin{prop}\label{prop_prod}
Suppose that $H$ is almost cut by valuations, and such that the convergence condition \eqref{eq-conv-new2} is satisfied. There is an integer $B\geq1$, depending only on $F,K,G$, and with $Q\mid B$, where $Q$ is as in Theorem \ref{prop-almostcut}, such that 
\begin{equation}\label{eq-prod2}
    \dens(\mc P_{H,C})=\sum_{n\mid B^\infty} \frac{g_{H_B}(n)c(n)}{[F_{n,n}:K]} 
\prod_{\ell\nmid B} \bigg( \chi_{H_\ell}(1)\Big( 1-\frac{1}{(\ell-1)\ell^r} \Big) + \frac{\ell^{r+1}-1}{(\ell-1)\ell^r}\sum_{\substack{v\geq1 \\ v\in V_\ell}}\frac{1}{\ell^{v(r+1)}}   \bigg).
\end{equation}
 In particular, we have
\[ 
\dens(\mc P_{H,C}) = A_{H,r} \cdot \sum_{n\mid B^\infty} \frac{g_{H_B}(n)c(n)}{[F_{n,n}:K]}  \cdot \prod_{\ell\mid B} A_{V_\ell,r}(\ell)^{-1}. 
\]
\end{prop}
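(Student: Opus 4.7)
Our plan is to rewrite the density $\dens(\mc P_{H,C})=\delta_{H,C}$ supplied by Theorem \ref{prop-almostcut} as an Euler-type product. First, we pool the two sums in \eqref{eq-almostcut} into a single sum: since $m\mid Q^\infty$ and $(n,Q)=1$ are coprime, the pair $(m,n)$ is uniquely recovered from $N:=mn$, and Lemma \ref{lem_mult} gives $g_{H_Q}(m)g_{H_Q'}(n)=g_H(N)$, whence
\[
\dens(\mc P_{H,C}) = \sum_{N\geq 1}\frac{g_H(N)c(N)}{[F_{N,N}:K]}.
\]

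Next, by Proposition \ref{kummer} we select an integer $B$ divisible by $Q$ such that, for every $N_0\mid B^\infty$ and every $N_1$ coprime to $B$, we have $[F_{N_0N_1,N_0N_1}:K]=[F_{N_0,N_0}:K]\cdot\varphi(N_1)N_1^r$ and $F\cap K_{N_0N_1,N_0N_1}=F\cap K_{N_0,N_0}$, so that $c(N_0N_1)=c(N_0)$. Because $Q\mid B$, a direct unpacking of the definitions shows $H=H_B\cap H_B'$ with $H_B':=\bigcap_{\ell\nmid B}H_\ell$, and Lemma \ref{lem_mult} applied with $Q_0=B$ then yields the multiplicativity $g_H(N)=g_{H_B}(N_0)g_{H_B'}(N_1)$ together with $g_{H_B'}(N_1)=\prod_{\ell\nmid B}g_{H_\ell}(\ell^{v_\ell(N_1)})$. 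The absolute convergence supplied by \eqref{eq-conv-new2}, combined with the bound $|g_{H_\ell}(\ell^v)|\leq 1$ from Lemma \ref{lem_g}, legitimizes the rearrangement
\[
\dens(\mc P_{H,C})=\sum_{N_0\mid B^\infty}\frac{g_{H_B}(N_0)c(N_0)}{[F_{N_0,N_0}:K]}\prod_{\ell\nmid B}\sum_{v\geq 0}\frac{g_{H_\ell}(\ell^v)}{\varphi(\ell^v)\ell^{vr}},
\]
with the convention $\varphi(1)\cdot 1^r=1$ at $v=0$.

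The core computation is the explicit evaluation of each local factor for $\ell\nmid B$. By Lemma \ref{lem_g}, $g_{H_\ell}(\ell^v)=\chi_{H_\ell}(\ell^v)-\chi_{H_\ell}(\ell^{v-1})$ for $v\geq 1$ and $g_{H_\ell}(1)=\chi_{H_\ell}(1)$, so an Abel-summation of the weights $t_v:=1/(\varphi(\ell^v)\ell^{vr})$ telescopes the local sum to
\[
\chi_{H_\ell}(1)(1-t_1)+\sum_{v\geq 1,\,v\in V_\ell}(t_v-t_{v+1}).
\]
Substituting $t_1=1/((\ell-1)\ell^r)$ and $t_v-t_{v+1}=(\ell^{r+1}-1)/((\ell-1)\ell^r\ell^{v(r+1)})$ reproduces exactly the bracketed factor in \eqref{eq-prod2}, establishing the first identity.

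For the second identity, we match the local factor with $A_{V_\ell,r}(\ell)=\sum_{v\in V_\ell}E_{v,r}(\ell)$ from Remark \ref{rem-const}: when $0\in V_\ell$ the term $\chi_{H_\ell}(1)(1-t_1)$ equals $E_{0,r}(\ell)$ since $\chi_{H_\ell}(1)=1$, while when $0\notin V_\ell$ this term vanishes; in either case the remaining sum reproduces $\sum_{v\geq 1,\,v\in V_\ell}E_{v,r}(\ell)$, so the local factor coincides with $A_{V_\ell,r}(\ell)$. Writing $\prod_{\ell\nmid B}A_{V_\ell,r}(\ell)=A_{H,r}\prod_{\ell\mid B}A_{V_\ell,r}(\ell)^{-1}$ then yields the stated expression. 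The main obstacle is the careful multiplicativity bookkeeping across the decomposition $H=H_B\cap H_B'$ (in particular, that replacing $Q$ by a multiple $B$ preserves the decomposition) and the clean identification of the local factor with $A_{V_\ell,r}(\ell)$ in both the $0\in V_\ell$ and $0\notin V_\ell$ cases; once these are in place, the interchange of summation and Euler product is routine given \eqref{eq-conv-new2} and Proposition \ref{kummer}(a).
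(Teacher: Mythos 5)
Your proposal is correct and follows essentially the same route as the paper's proof: starting from the formula of Theorem \ref{prop-almostcut}, enlarging $Q$ to $B$ via Proposition \ref{kummer}(b)--(c) so that degrees factor and $c(ab)=c(a)$, factoring the prime-to-$B$ part into an Euler product, and evaluating each local factor by the telescoping identity from Lemma \ref{lem_g}, then identifying it with $A_{V_\ell,r}(\ell)$. Your extra bookkeeping (merging into a single series over $N$, checking that $H=H_B\cap H_B'$ persists for $Q\mid B$, and justifying the rearrangement via $|g_{H_\ell}(\ell^v)|\leq 1$) only makes explicit steps the paper leaves implicit.
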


\begin{proof}
By the assumption, $\dens(\mc P_{H,C})$ is given by the formula \eqref{eq-almostcut}, where we may replace $Q$ with a multiple, namely $B$. 
Writing $n=ab$ with $a\mid B^\infty$ and $(b,B)=1$, we have $[F_{n,n}:K]=[F_{a,a}:K]\varphi(b)b^r$ by Proposition \ref{kummer}(c). 
Moreover, by Proposition \ref{kummer}(b) we have $K_{b,b}\cap F_{a,a}=K$, implying $c(n)=c(a)$, because every $K$-automorphism counted by $c(n)$ must be the identity on $K_{b,b}$, and this requirement  is independent from any other condition on their restriction to $F_{a,a}$.
Therefore,
\[ 
\dens(\mc P_{H,C}) = \sum_{a\mid B^\infty} \frac{g_{H_B}(a)c(a)}{[F_{a,a}:K]}
\sum_{\substack{b\geq1\\ (b,B)=1}} \frac{g_{H'_B}(b)}{\varphi(b)b^r}.
\]
The inner sum over $b$ can be transformed into the Euler product
\begin{equation}\label{eq-eulprod2}
    \prod_{\ell\nmid B} \sum_{v\geq 0} \frac{g_{H_\ell}(\ell^v)}{\varphi(\ell^v)\ell^{vr}}.
\end{equation}
By Lemma \ref{lem_g} and recalling that $\chi_{H_\ell}(\ell^v)=1$ if and only if $v\in V_\ell$, for $\ell\nmid B$ we have
\begin{align*}
    \sum_{v\geq 0} \frac{g_{H_\ell}(\ell^v)}{\varphi(\ell^v)\ell^{vr}}  & = \chi_{H_\ell}(1)+\frac{\ell}{\ell-1}\sum_{v\geq 1} \frac{\chi_{H_\ell}(\ell^v)-\chi_{H_\ell}(\ell^{v-1})}{\ell^{v(r+1)}} \\
    &= \chi_{H_\ell}(1)\Big( 1-\frac{1}{(\ell-1)\ell^r} \Big) + \frac{\ell}{\ell-1}\Big( 1-\frac{1}{\ell^{r+1}} \Big) \sum_{\substack{v\geq1 \\ v\in V_\ell}}\frac{1}{\ell^{v(r+1)}}.
\end{align*}
Plugging this expression into  the product \eqref{eq-eulprod2}, the initial formula yields the result. For the last part of the statement it suffices to notice that the obtained expression for \eqref{eq-eulprod2} is equal to $\prod_{\ell\nmid B}A_{V_\ell,r}(\ell)$.
\end{proof}

\begin{rem}
    In the context of the previous proposition, writing $V_\ell$, for $\ell\nmid B$, as in \eqref{eq-vl} (with $a_1=0$ for all $\ell\nmid B$), we may use the formula \eqref{eq-gab} to express the product \eqref{eq-eulprod2}. For instance, if $a_1=0$ and $b_1\geq1$ for all $\ell\nmid B$, we may derive the alternative formula
    \[
 \dens(\mc P_{H,C})=\sum_{n\mid B^\infty} \frac{g_{H_B}(n)c(n)}{[F_{n,n}:K]} 
\prod_{\ell\nmid B} \bigg( 1+\frac{\ell}{\ell-1}\Big(\sum_{i\geq2}\frac{1}{\ell^{a_i(r+1)}} - \sum_{i\geq1} \frac{1}{\ell^{(b_i+1)(r+1)}} \Big)  \bigg),
\]
where the range of $i$ in each sum depends on the decomposition of each $V_\ell$.
\end{rem}

\begin{rem}
By Theorem \ref{thm-HQ} the sum in the formula \eqref{eq-prod2} corresponds to $\dens(\mc P_{H_B,C})$, see also \eqref{eq-HaarQ}. If $H$ is cut by valuations, then we may take $Q=1$ in Proposition \ref{prop_prod}, and for $D\geq1$ squarefree, $\dens(\mc P_{H_D,C})$ is given by \eqref{eq-prod2} with the inner product running through the primes $\ell\mid D/(D,B)$.
\end{rem}

\begin{exa}
Consider the set $S(k(\ell))$ of $k(\ell)$-free integers and keep the notation of Theorem \ref{thm_kl}. By Theorem \ref{thm_prod} or Proposition \ref{prop_prod} there is an integer $B\geq1$ (depending only on $F,K,G$) such that
\[ \dens(\mc P_{S(k(\ell)),C}) = A_{H,r}\cdot \frac{\sharp C_{B}}{[F_{f_B,f_B}:K]} \cdot \prod_{\ell\mid B} A_{V_\ell,r}(\ell)^{-1} \]
where $A_{H,r}=\prod_\ell A_{V_\ell,r}(\ell)$, with
\[ A_{V_\ell,r}(\ell) = 1-\frac{1}{(\ell-1)\ell^{k(\ell)(r+1)-1}},  \]
and  $C_{B}\subseteq\Gal(F_{f_B,f_B},K)$ consists of all $K$-automorphisms which are not the identity on $K_{\ell^{k(\ell)},\ell^{k(\ell)}}$ for each $\ell\mid B$, and whose restriction to $F$ lies in $C$. Moreover, we have
\[ \frac{\sharp C_{B}}{[F_{f_B,f_B}:K]} = \sum_{n\mid B}\frac{\mu(n)c(f_n)}{[F_{f_n,f_n}:K]}. \]
Notice that this example is illustrated under the assumption of GRH in \cite[Example 25]{JPS}.
\end{exa}

\section*{Acknowledgments}
The author is supported by the grant no.\ SRG2425-05 of the School of Mathematics and Physics at XJTLU. The author is grateful to Francesco Pappalardi for insightful discussions and to Pieter Moree for valuable feedback.

\end{document}